\documentclass[11pt,reqno]{amsart}
\usepackage{indentfirst, amssymb, amsmath, amsthm, mathrsfs, setspace, indentfirst, enumerate,  mathrsfs, amsmath, amsthm}
\usepackage[bookmarksnumbered, colorlinks, plainpages]{hyperref}
\usepackage{mathrsfs}

\textheight 22.78truecm \textwidth 15.5truecm
\setlength{\oddsidemargin}{0.35in}\setlength{\evensidemargin}{0.35in}

\setlength{\topmargin}{-.5cm}

\newtheorem*{theo2A}{Theorem 2.A}
\newtheorem*{theo2B}{Theorem 2.B}
\newtheorem*{theo2C}{Theorem 2.C}
\newtheorem*{theo2D}{Theorem 2.D}
\newtheorem*{theo2E}{Theorem 2.E}
\newtheorem*{theo2F}{Theorem 2.F}
\newtheorem*{theo2G}{Theorem 2.G}
\newtheorem*{theo2H}{Theorem 2.H}
\newtheorem*{theo2I}{Theorem 2.I}
\newtheorem*{theo2J}{Theorem 2.J}

\newtheorem*{exm2A}{Example 2.A}

\newtheorem*{conj2A}{Conjecture 2.A}

\newtheorem*{cor A}{Corollary A}
\newtheorem*{cor B}{Corollary B}
\newtheorem{ques}{Question}[section]

\newtheorem{theo}{Theorem}[section]
\newtheorem{lem}{Lemma}[section]
\newtheorem{cor}{Corollary}[section]
\newtheorem{note}{Note}[section]
\newtheorem{exm}{Example}[section]

\newcommand{\ol}{\overline}
\newcommand{\be}{\begin{equation}}
\newcommand{\ee}{\end{equation}}
\newcommand{\beas}{\begin{eqnarray*}}
\newcommand{\eeas}{\end{eqnarray*}}
\newcommand{\bea}{\begin{eqnarray}}
\newcommand{\eea}{\end{eqnarray}}

\numberwithin{equation}{section}
\begin{document}

\title[P\MakeLowercase {ower of a meromorphic function sharing value}......]{\LARGE
P\Large\MakeLowercase{ower of a meromorphic function sharing value with its
$\MakeLowercase{k}$-th order directional derivative in} $\mathbb{C}^m$}

\date{}
\author[A. B\MakeLowercase {anerjee}, S. M\MakeLowercase{ajumder} \MakeLowercase{and} D. P\MakeLowercase{ramanik}]{A\MakeLowercase {bhijit} B\MakeLowercase {anerjee}, S\MakeLowercase {ujoy} M\MakeLowercase {ajumder} \MakeLowercase {and} D\MakeLowercase {ebabrata} P\MakeLowercase {ramanik}$^*$}
\address{ Department of Mathematics, University of Kalyani, West Bengal 741235, India.}
\email{abanerjee\_kal@yahoo.co.in}
\address{Department of Mathematics, Raiganj University, Raiganj, West Bengal-733134, India.}
\email{sm05math@gmail.com, sjm@raiganjuniversity.ac.in}
\address{Department of Mathematics, Raiganj University, Raiganj, West Bengal-733134, India.}
\email{debumath07@gmail.com}

\renewcommand{\thefootnote}{}
\footnote{2020 \emph{Mathematics Subject Classification}: 32A20, 32A22 and 32H30.}
\footnote{\emph{Key words and phrases}: Meromorphic functions of several complex variables, directional
derivative, Nevanlinna theory in higher dimensions, uniqueness, shared values.}
\footnote{\emph{Corresponding Author}: Debabrata Pramanik.}

\renewcommand{\thefootnote}{\arabic{footnote}}
\setcounter{footnote}{0}

\begin{abstract} In the context of several complex variables, we investigate the uniqueness problem for a power of a meromorphic function that shares a value with its $k$-th order directional derivative in $\mathbb{C}^m$. Our results extend previous uniqueness theorems from the one-variable case to higher dimensions. Furthermore, we provide numerous examples to demonstrate that our results are, in certain senses, best possible.
\end{abstract}

\thanks{Typeset by \AmS -\LaTeX}
\maketitle

\section{{\bf Introduction}}
We define $\mathbb{Z}_+=\mathbb{Z}[0,+\infty)=\{n\in \mathbb{Z}: 0\leq n<+\infty\}$ and $\mathbb{Z}^+=\mathbb{Z}(0,+\infty)=\{n\in \mathbb{Z}: 0<n<+\infty\}$.
On $\mathbb{C}^m$, we define
\[\partial_{z_i}=\frac{\partial}{\partial z_i},\ldots, \partial_{z_i}^{l_i}=\frac{\partial^{l_i}}{\partial z_i^{l_i}}\;\;\text{and}\;\;\partial^{I}=\frac{\partial^{|I|}}{\partial z_1^{i_1}\cdots \partial z_m^{i_m}}\]
where $l_i\in \mathbb{Z}^+\;(i=1,2,\ldots,m)$ and $I=(i_1,\ldots,i_m)\in\mathbb{Z}^m_+$ is a multi-index such that $|I|=\sum_{j=1}^m i_j$.

\smallskip
Let
\[ S^{2m-1}=\{(u_1,u_2,\ldots,u_m)\in \mathbb{C}^{m}: |u_1|^2+|u_2|^2+\cdots+|u_m|^2=1\}.\]

For a differentiable function $f$ and a direction $u=(u_1,u_2,\ldots,u_m) \in S^{2m-1}$ the directional derivative of $f$ along $u$ is defined by $\partial_{u}(f)$ of $f$ along a direction  is defined by
\[\partial_u(f)=\sideset{}{_{j=1}^{m}}{\sum} u_{j}\partial_{z_j}(f).\]
\par The $k$-th order directional derivative  $\partial^k_u(f)$ of $f$ along $u$ is then defined inductively by
$$\partial^k_u(f)=\partial_u(\partial_u^{k-1}(f)),\;\;\; k\in \mathbb{Z}^+$$
with the base case  $$\partial^1_u(f)=\partial_u(f).$$

\smallskip
The classical Nevanlinna value distribution theory, developed in the 1920s by Rolf Nevanlinna, investigates the distribution of values taken by meromorphic functions of a single complex variable. With the advancement of complex analysis into higher dimensions, researchers have sought to generalize Nevanlinna's results to the setting of several complex variables. This pursuit gave rise to Nevanlinna theory in several complex variables, which examines the behavior and value distribution of holomorphic and meromorphic mappings from $\mathbb{C}^m$ into complex manifolds or projective spaces. In recent years, Nevanlinna theory in several complex variables has become a vibrant and rapidly evolving area of research. A particularly active direction involves applying value distribution theory to the study of normal families, partial differential equations (PDEs), partial difference equations and partial differential-difference equations. Researchers increasingly employ techniques from this theory to analyze the growth, value distribution and uniqueness of meromorphic solutions to such equations. These developments not only generalize classical one-variable results but also yield new insights into the qualitative behavior of complex-valued solutions in multidimensional settings (see \cite{BM}, \cite{TBC1}-\cite{PVD2}, \cite{K1}-\cite{LS1}, \cite{FL}, \cite{FL2}, \cite{Mss}, \cite{MD1}, \cite{MDP}).

\smallskip
We firstly recall some basis notions in several complex variables (see \cite{HLY,MR,WS}).
On $\mathbb{C}^m$, the exterior derivative $d$ splits $d= \partial+ \bar{\partial}$ and twists to $d^c= \frac{\iota}{4\pi}\left(\bar{\partial}- \partial\right)$. Clearly $dd^{c}= \frac{\iota}{2\pi}\partial\bar{\partial}$. A non-negative function $\tau: \mathbb{C}^m\to \mathbb{R}[0,b)\;(0<b\leq \infty)$ of class $\mathbb{C}^{\infty}$ is said to be an exhaustion of $\mathbb{C}^m$ if $\tau^{-1}(K)$ is compact whenever $K$ is. 
An exhaustion $\tau_m$ of $\mathbb{C}^m$ is defined by $\tau_m(z)=||z||^2$. The standard Kaehler metric on $\mathbb{C}^m$ is given by $\upsilon_m=dd^c\tau_m>0$. On $\mathbb{C}^m\backslash \{0\}$, we define $\omega_m=dd^c\log \tau_m\geq 0$ and $\sigma_m=d^c\log \tau_m \wedge \omega_m^{m-1}$. For any $S\subseteq \mathbb{C}^m$, let $S[r]$, $S(r)$ and $S\langle r\rangle$ be the intersection of $S$ with respectively the closed ball, the open ball, the sphere of radius $r>0$ centered at $0\in \mathbb{C}^m$.

\smallskip
Let $f$ be a holomorphic function on $G(\not=\varnothing)$, where $G$ is an open subset of $\mathbb{C}^m$. Then we can write $f(z)=\sum_{i=0}^{\infty}P_i(z-a)$, where the term $P_i(z-a)$ is either identically zero or a homogeneous polynomial of degree $i$. Certainly the zero multiplicity $\mu^0_f(a)$ of $f$ at a point $a\in G$ is defined by $\mu^0_f(a)=\min\{i:P_i(z-a)\not\equiv 0\}$.

\medskip
Let $f$ be a meromorphic function on $G$. Then there exist holomorphic functions $g$ and $h$ such that $hf=g$ on $G$ and $\dim_z h^{-1}(\{0\})\cap g^{-1}(\{0\})\leq m-2$. Therefore the $c$-multiplicity of $f$ is just $\mu^c_f=\mu^0_{g-ch}$ if $c\in\mathbb{C}$ and $\mu^c_f=\mu^0_h$ if $c=\infty$. The function $\mu^c_f: \mathbb{C}^m\to \mathbb{Z}$ is nonnegative and is called the $c$-divisor of $f$. If $f\not\equiv 0$ on each component of $G$, then $\nu=\mu_f=\mu^0_f-\mu^{\infty}_f$ is called the divisor of $f$. We define 
$\text{supp}\; \nu=\text{supp}\;\mu_f=\ol{\{z\in G: \nu(z)\neq 0\}}$.

\smallskip
Let $f$, $g$ and $a$ be meromorphic functions on $\mathbb{C}^m$. Then one can find three pairs
of entire functions $f_1$ and $f_2$, $g_1$ and $g_2$, and $a_1$ and $a_2$, in which each pair is coprime
at each point in $\mathbb{C}^m$ such that $f = f_2/f_1$, $g=g_2/g_1$ and $a = a_2/a_1$.
We say that $f$ and $g$ share $a$ CM if $\mu_{a_1f_2-a_2f_1}^0=\mu_{a_1g_2-a_2g_1}^0\;(a\not\equiv \infty)$ and $\mu_{f_1}^0=\mu_{g_1}^0\;\;(a=\infty)$. Again we say that $f$ and $g$ share $a$ IM if $\mu_{a_1f_2-a_2f_1,1}^0=\mu_{a_1g_2-a_2g_1,1}^0\;(a\not\equiv \infty)$ and $\mu_{f_1,1}^0=\mu_{g_1,1}^0\;\;(a=\infty)$.

\smallskip
A certain property is said to be true generically if it holds except at most an analytic subset of strictly smaller dimension. For example, for a meromorphic function $f$ in $\mathbb{C}^m$, we know that holomorphic functions $g$ and $h$ exist such that $\dim g^{-1}(\{0\})\cap h^{-1}(\{0\})\leq m-2$ and $f = g/h$. Consequently the common zero factor of $g$ and $h$ can be cancelled generically over its support sets.

\smallskip
For $t>0$, the counting function $n_{\nu}$ is defined by
\beas n_{\nu}(t)=t^{-2(m-1)}\int_{A[t]}\nu \upsilon_m^{m-1},\eeas
where $A=\text{supp}\;\nu$. 
The valence function of $\nu$ is defined by 
\[N_{\nu}(r)=N_{\nu}(r,r_0)=\int_{r_0}^r n_{\nu}(t)\frac{dt}{t}\;\;(r\geq r_0).\]

Also we write $N_{\mu_f^a}(r)=N(r,a;f)$ if $a\in\mathbb{C}$ and $N_{\mu_f^a}(r)=N(r,f)$ if $a=\infty$.
For $k\in\mathbb{N}$, define the truncated multiplicity functions on $\mathbb{C}^m$ by $\mu_{f,k}^a(z)=\min\{\mu_f^a(z),k\}$,
\beas \mu_{f)k}^a(z)=\begin{cases}
\mu_f^a(z), &\text{if $\mu_f^a(z)\leq  k$}\\
0, &\text{if $\mu_f^a(z)>k$}
\end{cases},\;\;
\bar{\mu}_{f)k}^a(z)=\begin{cases}
1, &\text{if $\mu_f^a(z)\leq  k$}\\
0, &\text{if $\mu_f^a(z)>k$},
\end{cases}\eeas
\beas \mu_{f(k}^a(z)=\begin{cases}
\mu_f^a(z), &\text{if $\mu_f^a(z)\geq  k$}\\
0, &\text{if $\mu_f^a(z)<k$}
\end{cases},\;\;
\bar{\mu}_{f(k}^a(z)=\begin{cases}
1, &\text{if $\mu_f^a(z)\geq  k$}\\
0, &\text{if $\mu_f^a(z)<k$}
\end{cases}\eeas
and the truncated valence functions
\beas N_{\nu}(t)=\begin{cases}
N_k(t,a;f), &\text{if $\nu=\mu_{f,k}^a$}\\
\ol{N}(t,a;f), &\text{if $\nu=\mu_{f,1}^a$}\\
N_{k)}(t,a;f), &\text{if $\nu=\mu_{f)k}^a$}\\
\ol{N}_{k)}(t,a;f), &\text{if $\nu=\bar{\mu}_{f)k}^a$}\\
N_{(k}(t,a;f), &\text{if $\nu=\mu_{f(k}^a$}\\
\ol{N}_{(k}(t,a;f), &\text{if $\nu=\bar{\mu}_{f(k}^a$}.
\end{cases}\eeas

\medskip
With the help of the positive logarithm function, we define the proximity function of $f$ by
\[m(r, f)=\mathbb{C}^m\langle r; \log^+ | f | \rangle=\int_{\mathbb{C}^m\langle r\rangle} \log^+ |f|\;\sigma_m.\]

The characteristic function of $f$ is defined by $T(r, f)=m(r,f)+N(r,f)$. We define $m(r,a;f)=m(r,f)$ if $a=\infty$ and $m(r,a;f)=m(r,1/(f-a))$ if $a$ is finite complex number. Now if $a\in\mathbb{C}$, then the first main theorem of Nevanlinna theory states that $m(r,a;f)+N(r,a;f)=T(r,f)+O(1)$, where $O(1)$ denotes a bounded function when $r$ is sufficiently large.
We define the order and the hyper-order of $f$ by
\[\rho(f):=\limsup _{r \rightarrow \infty} \frac{\log T(r, f)}{\log r}\;\text{and}\;\rho_2(f):=\limsup _{r \rightarrow \infty} \frac{\log \log T(r, f)}{\log r}.\]

Let $S(f)=\{g:\mathbb{C}^m\to\mathbb{P}^1\;\text{meromorphic}:\parallel T(r,g)=o(T(r,f))\}$, where $\parallel$ indicates that the equality holds only outside a set of finite measure on $\mathbb{R}^+$ and the element in $S(f)$ is called the small function of $f$.

\section{\bf{Uniqueness of meromorphic function concerning directional derivative}}
The question of when an entire function shares values with its derivatives-and whether this implies the function must coincide with its derivative-is a classical and extensively studied problem in value distribution theory, particularly within the framework of Nevanlinna theory. A central case of interest arises when an entire function and its first derivative share two values, counting multiplicities (CM), a scenario that has attracted considerable attention. Such uniqueness problems trace back to fundamental concerns in Nevanlinna theory about how the value-distribution properties of a function are constrained by its derivatives. A key question in this context is:
Under what conditions does an entire function coincide with its derivative if they share a certain number of values, with or without multiplicities?

Rubel and Yang \cite{RY} first considered the uniqueness of an entire function when it shares two values CM with its first derivative and proved the following

\begin{theo2A}\cite{RY} Let $f$ be a non-constant holomorphic function in $\mathbb{C}$ and let $a$ and $b$ be two distinct finite complex numbers. If $f$ and $f^{(1)}$ share $a$ and $b$ CM, then $f\equiv f^{(1)}$.
\end{theo2A}

One active line of research extends the uniqueness theory of entire and meromorphic functions to the setting of several complex variables. In particular, researchers have investigated whether analogous results hold for functions in $\mathbb{C}^m$, especially regarding their directional derivatives. In a notable contribution, Berenstein et. al. \cite{BCL} (1995) established a uniqueness theorem for non-constant meromorphic functions in $\mathbb{C}^m$ and their directional derivatives in terms of shared values. They proved the following result.

\begin{theo2B} \cite[Theorem 2.1]{BCL} Let $f$ be a non-constant meromorphic function in $\mathbb{C}^m$. If $f$ and $\partial_u(f)$ share three distinct polynomials $\alpha_j$ or $\infty$, $j=1,2,3$ CM, then $f\equiv \partial_u(f)$.
\end{theo2B}

Also in the same paper, Berenstein et al. \cite{BCL} exhibited the following example to show that the number of shared values cannot be reduced to two in Theorem 2.B.

\begin{exm2A}\cite{BCL} Let
\[f(z)=e^{e^{z_1+z_2\ldots+z_m}}\sideset{}{_{0}^{z_1}}{\int} e^{-e^{t+z_2\ldots+z_m}}\left(1-e^{z_1+z_2\ldots+z_m}\right)tdt.\]
Let $u=(1,0,\ldots,0)$, $\alpha_1(z)=z_1$ and $\alpha_2(z)=\infty$. Note that $\partial_u(f)=\partial_{z_1}(f)$ for $\dim(\mathbb{C}^m)>1$ and $\partial_u(f)=f^{(1)}$ for $\dim(\mathbb{C}^m)=1$. Since 
\[(\partial_u(f(z))-\alpha_1(z))/(f(z)-\alpha_1(z))=e^{z_1+\ldots+z_m},\]
 we see that $f$ and $\partial_u(f)$ share $\alpha_1$ and $\alpha_2$ CM.
\end{exm2A}

In 1996, Hu and Yang \cite{HY1} extended Theorem 2.B to moving targets on $\mathbb{C}^m$. Actually they proved the following result.

\begin{theo2C} \cite[Theorem 7.1]{HY1} Let $f$ be a non-constant meromorphic function in $\mathbb{C}^m$. If $f$ and $\partial_u(f)$ share $a$, $b$ and $\infty$ CM, where $a, b\in S(f)$ are distinct, then either $\partial_u (f)\equiv f$ or
\[\frac{f-a}{\partial_u(f)-a} \partial_u\left(\frac{\partial_u(f)-a}{f-a}\right)\equiv \frac{f-b}{\partial_u(f)-b} \partial_u\left(\frac{\partial_u(f)-b}{f-b}\right).\]

The latter can be ruled out by one of the following conditions:  
\[(i)\;\;\dim(\mathbb{C}^m)=1\;\;(ii)\;\;a\left(\partial_u(a)-a\right) \not \equiv 0, b\left(\partial_u(b)-b\right) \not \equiv 0, \partial_u(a) \not \equiv b\left(\text{or}\; \partial_u(b) \not \equiv a\right).\]
\end{theo2C}

Let us take $u=(u_1,\ldots,u_m)$ such that $u_j=1$ and $u_i=0$ for $i\neq j$. Clearly $\partial_u(f)=\partial_{z_j}(f)$ for $\dim(\mathbb{C}^m)>1$ and $\partial_u(f)=f^{(1)}$ for $\dim(\mathbb{C}^m)=1$. Then if $f$ and $\partial_u(f)$ share $\infty$ CM, then obviously $f$ will be a holomorphic function in $\mathbb{C}^m$. Therefore by Theorem 2.C, we get: Let $f$ be a non-constant holomorphic function in $\mathbb{C}^m$. If $f$ and $\partial_u(f)$ share $a$ and $b$ CM, where $a, b\in S(f)$ are distinct, then the conclusions of Theorem 2.C holds. Consequently Theorem 2.C is an improvement as well as an extension of Theorem 2.A from one complex variable to several complex variables.

\smallskip
In 1996, Br\"{u}ck \cite{RB} discussed the possible relation between $f$ and $f^{(1)}$ when an entire function $f$ in $\mathbb{C}$ and it's derivative $f^{(1)}$ share only one finite value CM. In this direction, Br\"{u}ck \cite{RB} proposed the following conjecture: 

\begin{conj2A}\cite{RB} Let $f$ be a non-constant entire function in $\mathbb{C}$ such that 
$\rho_{2}(f)\not\in\mathbb{N}\cup\{\infty\}$. If $f$ and $f^{(1)}$ share one finite value $a$ CM, then $f^{(1)}-a=c(f-a)$, where $c\in\mathbb{C}\backslash \{0\}$. 
\end{conj2A}

Conjecture 2.A remains a central and unresolved problem in the uniqueness theory concerning entire functions and their derivatives. Although substantial progress has been made under various additional assumptions, the conjecture in its full generality is still open. Notably, Yang and Zhang \cite{YZ} showed that the conjecture holds when, instead of the entire function itself, an appropriate power of the function is considered. They established the following result:

\begin{theo2D}\cite[Theorem 4.4]{YZ}  Let $f$ be a non-constant entire function in $\mathbb{C}$ and $n\geq 7$ be an integer. If $f^{n}$ and $(f^{n})^{(1)}$ share $1$ CM, then $f^{n}\equiv (f^{n})^{(1)}$, i.e., $f(z)=ce^{\frac{z}{n}}$, where $c\in\mathbb{C}\backslash \{0\}$. 
\end{theo2D}

In 2010, Zhang and Yang \cite{ZY} improved and generalized Theorem 2.D by considering higher order derivatives and by reducing the lower bound of the power of the entire function. In one of their results they also considered IM sharing of values. We now state two results of Zhang and Yang \cite{ZY}.

\begin{theo2E}\cite[Theorem 2.1]{ZY} Let $f$ be a non-constant entire function in $\mathbb{C}$ and $k, n$ be positive integers such that $n \geq k+1$. If $f^{n}$ and $(f^{n})^{(k)}$ share $1$ CM, then $f^{n}\equiv (f^{n})^{(k)}$, i.e., $f(z)=ce^{\frac{\lambda}{n}z}$, where $c, \lambda\in\mathbb{C}\backslash \{0\}$ such that $\lambda^{k}=1$. 
\end{theo2E} 

\begin{theo2F}\cite[Theorem 3.1]{ZY} Let $f$ be a non-constant entire function in $\mathbb{C}$ and $k, n$ be two positive integers such that $n \geq k+2$. If $f^{n}$ and $(f^{n})^{(k)}$ share $1$ IM, then conclusion of Theorem 2.E holds. 
\end{theo2F}

In connection to Theorem 2.F, Zhang and Yang \cite{ZY} posed the problem of investigating the validity of the result for $n \geq k+1$. They could prove Theorem 2.F for $n\geq k+1$ but only in the case when $k=1$. In 2023, Majumder et al. \cite{Mss} have settled the problem for $k\geq 2$. We now recall the result.

\begin{theo2G}\cite[Theorem 4.1]{Mss} Let $f$ be a transcendental entire function in $\mathbb{C}$ and let $k, n$ be two positive integers such that $n \geq k+1$. If $f^{n}$ and $(f^{n})^{(k)}$ share $1$ IM, then the conclusion of Theorem 2.F holds.
\end{theo2G} 

Let 
\[f(z)=\frac{2e^{z}+z+1}{e^{z}+1}.\]

Clearly $f$ and $f^{(1)}$ share $1$ CM, but $f^{(1)}-1\not\equiv c(f-1)$, where $c\in\mathbb{C}\backslash \{0\}$. 
Hence it is easy to conclude that Conjecture 2.A for meromorphic functions fails in general. Perhaps Yang and Zhang \cite{YZ} were the first to consider the uniqueness of a power of a meromorphic function $F=f^{n}$ and its derivative $F^{(1)}$ when they share certain value and that leads to a specific form of the function $f$. We now recall their result.
 
\begin{theo2H}\cite[Theorem 4.3]{YZ} Let $f$ be a non-constant meromorphic function in $\mathbb{C}$ and $n\geq 12$ be an integer. If $f^n$ and $(f^n)^{(1)}$ share $1$ CM, then conclusion of Theorem 2.E holds.
\end{theo2H}

In 2014, Li \cite{YL} further improved Theorem 2.H by lowering the power of the meromorphic function and proved the following result. 

\begin{theo2I}\cite[Theorem 1]{YL} Let $f$ be a non-constant meromorphic function in $\mathbb{C}$ and let $n$ and $k$ be two positive integers such that $n\geq k+2$. If $f^n$ and $(f^n)^{(k)}$ share $a(\neq 0,\infty)$ CM, then 
the conclusion of Theorem 2.E holds.
\end{theo2I} 

In the following result Zhang \cite{JLZ} generalized Theorem 2.I from sharing values CM to IM.

\begin{theo2J}\cite[Corollary 2.2]{JLZ} Let $f$ be a non-constant meromorphic function in $\mathbb{C}$ and let $n$ and $k$ be positive integers such that $n > 2k + 2$. If $f^n$ and $(f^n)^{(k)}$ share $a(\neq 0,\infty)$ IM, then conclusion of Theorem 2.E holds.
\end{theo2J}

A natural question arises as to whether analogous uniqueness results hold for powers of meromorphic functions that share a value with their $k$-th order directional derivatives in the context of several complex variables. Motivated by this inquiry, the present paper aims to generalize Theorems 2.E-2.J-originally formulated in the one-variable setting-to the framework of several complex variables. Specifically, we investigate the uniqueness properties of powers of meromorphic functions defined on $\mathbb{C}^m$ that share a finite value (either counting multiplicities (CM) or ignoring multiplicities (IM)) with their $k$-th order directional derivatives along a fixed direction. These results extend classical Nevanlinna-type uniqueness theorems to higher dimensions and contribute to the expanding body of research on value distribution and uniqueness theory in several complex variables. 

Now we state our results.

\begin{theo}\label{t2.1} Let $f$ be a non-constant entire function in $\mathbb{C}^m$ and $k, n$ be positive integers such that $n\geq k+1$ and $\partial_u^k(f^n)\not\equiv 0$. If $f^n$ and $\partial_u^k(f^n)$ share $1$ CM, then $f\equiv c\partial_u(f)$, where $c\in\mathbb{C}\backslash \{0\}$ such that $(c/n)^k=1$. In particular if $u_j=1$ and $u_i=0$ for $i\neq j$, then 
\[f(z)=e^{cz_j+\alpha(z)},\]
where $\alpha(z)=\alpha(z_1,\ldots,z_{j-1}, z_{j+1},\ldots,z_m)$ is a non-constant entire function in $\mathbb{C}^{m-1}$ and $(c/n)^k=1$.
\end{theo}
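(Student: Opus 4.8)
The plan is first to prove the identity $\partial_u^k(f^n)\equiv f^n$, and then to read off the form of $f$. Put $F=f^n$: since $f$ is entire, so is $F$, with $N(r,F)\equiv0$ and $T(r,F)=nT(r,f)+O(1)$. As $F$ and $\partial_u^k(F)$ are entire and share $1$ CM, the quotient $(\partial_u^k(F)-1)/(F-1)$ is a zero-free holomorphic function on $\mathbb C^m$, hence equals $e^{h}$ for an entire $h$, so that
\[
\partial_u^k(F)-1=e^{h}(F-1).
\]
The generalized Leibniz rule gives a factorisation $\partial_u^k(f^n)=f^{\,n-k}P$, where $P$ is a homogeneous differential polynomial of degree $k$ in $f$ (with arguments $\partial_u f,\dots,\partial_u^k f$) satisfying $m(r,P/f^{k})=S(r,f)$; this uses $n\ge k+1$, and shows that a zero of $f$ of multiplicity $p$ makes $\partial_u^k(F)$ vanish there to order $\ge np-k\ge1$. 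It remains to prove $e^{h}\equiv1$. If $e^{h}\equiv c$ is constant and $c\neq1$, evaluating $\partial_u^k(F)=cF+(1-c)$ at a zero of $f$ forces $c=1$; so instead $f$ is zero-free, $F=e^{n\beta}$, and dividing by $F$ and applying the logarithmic derivative lemma ($F=(1-c)/(Q_k-c)$ with $m(r,Q_k)=S(r,F)$) gives $T(r,F)=S(r,F)$, impossible. Hence $c=1$.

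\textbf{Ruling out $e^{h}$ non-constant (the crux).} Here I would transplant the one-variable scheme (see \cite{ZY,YL}) to $\mathbb C^m$. Introduce the auxiliary function
\[
\psi:=\frac{\partial_u^{k+1}(F)}{\partial_u^k(F)-1}-\frac{\partial_u(F)}{F-1};
\]
applying $\partial_u$ to the relation above shows $\psi=\partial_u h$, so by the several-variable logarithmic derivative lemma $T(r,\psi)=S(r,f)$. If $\psi\not\equiv0$, evaluating $\psi$ at the zeros of $F$ — where $F-1$ and $\partial_u^k(F)-1$ do not vanish while $\partial_u F$ and $\partial_u^{k+1}F$ vanish to orders $\ge np-1$ and $\ge np-k-1$ — gives $N(r,0;\psi)\ge nN(r,0;f)-(k+1)\overline N(r,0;f)$; with $T(r,\psi)=S(r,f)$ this forces $nN(r,0;f)-(k+1)\overline N(r,0;f)=S(r,f)$, hence $N(r,0;f)=S(r,f)$ when $n\ge k+2$ and ``almost all zeros of $f$ simple'' when $n=k+1$; feeding this into the several-variable second main theorem for $f$ with targets $0$ and the $n$-th roots of unity, together with the sharing relation, should yield $T(r,f)=S(r,f)$, a contradiction. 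If $\psi\equiv0$, then $\partial_u h\equiv0$ and $\partial_u^{k+1}(F)=e^{h}\partial_u(F)$; using $\partial_u F=nf^{n-1}\partial_u f$ and the simple-pole structure of $\partial_u f/f$ gives $T(r,e^{h})\le\overline N(r,0;f)+S(r,f)$, while $e^{h}-1=(\partial_u^k(F)-F)/(F-1)$ vanishes to order $\ge n\mu_f^0(z_0)-k$ at each zero $z_0$ of $f$, so $N(r,1;e^{h})\ge nN(r,0;f)-k\overline N(r,0;f)$; since $e^{h}$ is then constant along every complex line in the direction $u$ and equals $1$ on $f^{-1}(0)$, these force $e^{h}\equiv1$ unless the zero divisor of $f$ is $u$-cylindrical, a sub-case closed off by the second main theorem. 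In all cases $e^{h}\equiv1$, i.e. $\partial_u^k(f^n)\equiv f^n$.

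\textbf{From the identity to the form of $f$.} Set $g=f^n$, so $\partial_u^k g\equiv g$. For each $w\in\mathbb C^m$ the entire function $t\mapsto g(w+tu)$ solves $\varphi^{(k)}=\varphi$, hence is an exponential polynomial $\sum_{j=0}^{k-1}c_j(w)e^{\omega_j t}$ with $\omega_0,\dots,\omega_{k-1}$ the $k$-th roots of unity; since this restriction is an $n$-th power of an entire function and $n\ge k+1>k-1$, while a nonzero exponential polynomial with $k$ frequencies has zeros of multiplicity $\le k-1$, for each $w$ it is either identically $0$ or zero-free, and in the latter case (Vandermonde) exactly one $c_j(w)$ is nonzero. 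Hence $\partial_u f/f$ is holomorphic on the connected set $\mathbb C^m\setminus f^{-1}(0)$ and takes values in the finite set $\{\omega_j/n\}$, so it is a constant $\lambda=\omega_*/n$; therefore $\partial_u f\equiv\lambda f$ on $\mathbb C^m$, i.e. $f\equiv c\,\partial_u f$ with $c=1/\lambda$, and $(c/n)^k=\omega_*^{-k}=1$. When $u_j=1$ and $u_i=0$ for $i\ne j$, the relation $\partial_u f\equiv\lambda f$ becomes the first-order linear equation $\partial_{z_j}f\equiv\lambda f$, and integrating in $z_j$ produces the stated exponential form, with $\alpha$ independent of $z_j$ and non-constant since $f$ is.

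\textbf{Expected obstacle.} The real work is the crux step — controlling $e^{h}$ when it is not constant. The endpoint $n=k+1$ forces the multiplicity bookkeeping to its limit (as in \cite{Mss} for the IM analogue), and the possibility $\partial_u h\equiv0$ with $h$ non-constant is a genuinely several-variable phenomenon with no one-variable counterpart, so the classical argument must be supplemented precisely there; granted the $\mathbb C^m$ first and second main theorems and the directional logarithmic derivative lemma, the remaining steps are routine.
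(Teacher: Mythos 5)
Your skeleton (write the CM sharing as $\partial_u^k(F)-1=e^{h}(F-1)$ with $F=f^n$, show $e^{h}\equiv 1$, then extract the form of $f$) is the same as the paper's, and your reduction is fine for $n\ge k+2$ and for constant $e^{h}$; but the decisive endpoint $n=k+1$ with $e^{h}$ non-constant and $\psi=\partial_u h\not\equiv 0$ is exactly where your argument has a genuine gap, and it is not "routine''. Your zero count gives $\mu^0_{\psi}\ge np-k-1$ at a zero of $f$ of multiplicity $p$; at $n=k+1$ and $p=1$ this is $0$, so simple zeros of $f$ contribute nothing to $N(r,0;\psi)$ --- and these are precisely the zeros that dominate, since (as in the paper's (\ref{bm.5a})--(\ref{bm.6})) one is reduced to $\parallel T(r,f)=N_{1)}(r,0;f)+o(T(r,f))$. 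The proposed rescue via the second main theorem with targets $0$ and the $n$-th roots of unity does not close this: the only bound the sharing hypothesis yields for $\sum_j\overline N(r,\zeta_j;f)=\overline N(r,1;F)$ is $\overline N(r,1;F)\le N\bigl(r,\partial_u^k(F)/F\bigr)+o(T(r,f))\le k\,\overline N(r,0;f)+o(T(r,f))$, and then the SMT inequality reads $(n-1)T(r,f)\le (k+1)\overline N(r,0;f)+o(T(r,f))\le (k+1)T(r,f)+o(T(r,f))$, which is vacuous exactly at $n=k+1$ (the same saturation occurs if you apply the SMT to $F$ with $0,1,\infty$). This is the point where the paper invests its real work: the auxiliary function $\varphi$ of (\ref{bm.1}) giving $\parallel m(r,0;f)=o(T(r,f))$ and $N_{(2}(r,0;f)=o(T(r,f))$, the identity $f^{k+1}P(f)=Q(f)$ of (\ref{bm.10}), the several-variables Clunie/Tumura--Clunie Lemma \ref{l3} of Hu--Yang to force $\parallel T(r,P(f))=o(T(r,f))$, and the dichotomy $P(f)\not\equiv 0$ versus $P(f)\equiv 0$ (the latter via $\partial_u(f)/f$ small and an SMT argument for $e^{\alpha}$). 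None of this is supplied or replaced by your sketch. Your $\psi\equiv 0$ sub-case is also left vague (the "$u$-cylindrical divisor'' dichotomy is not an argument); a correct short route exists --- from $\partial_u^{k+1}(F)=e^{h}\partial_u(F)$ one gets $T(r,e^{h})=m(r,e^{h})=o(T(r,f))$, and then $1/F=\bigl(e^{h}-\partial_u^k(F)/F\bigr)/(e^{h}-1)$ forces $T(r,F)=o(T(r,f))$ when $e^{h}\not\equiv 1$ --- but you would need to write it out.

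The rest of the proposal is sound, and your final step is a genuinely different ending from the paper's: once $F\equiv\partial_u^k(F)$ is established, you restrict to lines $t\mapsto w+tu$, solve $\varphi^{(k)}=\varphi$, use that an exponential sum with $k$ distinct frequencies has zero multiplicities at most $k-1$ while an $n$-th power with $n\ge k+1$ forces higher multiplicity, and conclude via Borel/Vandermonde that $\partial_u(f)/f$ is a constant in $\{\omega_j/n\}$; this is correct and more elementary than the paper's Case 2, which instead shows $\psi_1=\partial_u(f)/f$ is a small function and uses the recurrence (\ref{bm.26})--(\ref{bm.28}) together with Lemma \ref{l3} to make it constant. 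But without a complete argument in the endpoint case $n=k+1$, the proof as proposed is incomplete.
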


\begin{theo}\label{t2.3} Let $f$ be a non-constant entire function in $\mathbb{C}^m$ and $k, n$ be positive integers such that $n\geq k+2$ and $\partial_u^k(f^n)\not\equiv 0$. If $f^n$ and $\partial_u^k(f^n)$ share $1$ IM, then conclusion of Theorem \ref{t2.1} holds.
\end{theo}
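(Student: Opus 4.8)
The plan is to set $F:=f^{n}$ and $G:=\partial_u^{k}(f^{n})=\partial_u^{k}(F)$ and to show that the hypotheses force $F\equiv G$; once that is in hand, the functional identity $f^{n}\equiv\partial_u^{k}(f^{n})$ yields the stated form of $f$ exactly as in the final step of the proof of Theorem~\ref{t2.1}. Briefly, $F\equiv G$ makes $f$ zero-free by restricting the relation $\partial_u^{k}F=F$ to the complex lines $\{z_{0}+tu\}$ (on which any solution of $y^{(k)}=y$ with a zero of order $\ge n>k$ vanishes identically, so a zero of $f$ would force $F\equiv0$ on an open set); hence $f=e^{g}$, then $\partial_u^{k}(e^{ng})\equiv e^{ng}$, and the same line restriction exhibits $e^{ng}$ as a nowhere-vanishing exponential sum, hence a single exponential, which gives $\partial_u g\equiv\mathrm{const}$ together with the required root-of-unity condition. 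Since $f$ is entire and non-constant, $F$ and $G$ are entire, $G\not\equiv0$, $N(r,F)=N(r,G)=0$, $T(r,F)=n\,T(r,f)+O(1)$, and $m(r,G/F)=m(r,\partial_u^{k}F/F)=S(r,f)$ by the lemma on the logarithmic derivative for directional derivatives. I would record two structural facts used throughout: an elementary induction gives $\partial_u^{k}(f^{n})=f^{\,n-k}P$, where $P$ is a differential polynomial of weight $k$ in $f,\partial_u f,\dots,\partial_u^{k}f$ (so $T(r,P)\le k\,T(r,f)+S(r,f)$, and each zero of $f$ of multiplicity $\mu$ is a zero of $G$ of multiplicity $\ge(n-k)\mu\ge n-k\ge2$); and the zeros of $F$ have multiplicity $\ge n$, whence $\overline N(r,0;F)=\overline N(r,0;f)\le\tfrac1n T(r,F)+O(1)$. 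The role of the hypothesis $n\ge k+2$, as opposed to $n\ge k+1$ in Theorem~\ref{t2.1}, is to supply exactly the slack lost in passing from CM to IM sharing.

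Following the one-variable IM-sharing machinery, I would introduce
\[
H=\left(\frac{\partial_u^{2}F}{\partial_u F}-\frac{2\,\partial_u F}{F-1}\right)-\left(\frac{\partial_u^{2}G}{\partial_u G}-\frac{2\,\partial_u G}{G-1}\right),
\]
and check that $m(r,H)=S(r,f)$ (again by the logarithmic derivative lemma for $\partial_u$), that $H$ is holomorphic at a generic common simple $1$-point of $F$ and $G$ since the two $-2/w$ singularities cancel there, and hence that the pole divisor of $H$ is carried by the zeros of $\partial_u F$ and $\partial_u G$ lying off $\{F=1\}$ together with the multiple $1$-points, with pole order at most $1$. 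Because $\partial_u F=nf^{\,n-1}\partial_u f$ and $\partial_u G=f^{\,n-k-1}Q$ for a differential polynomial $Q$ with $T(r,Q)=O\!\big(k\,T(r,f)\big)+S(r,f)$, the part of $N(r,H)$ coming from these zeros is bounded by a fixed multiple of $T(r,F)$ that stays below $\big(1-\tfrac1n\big)T(r,F)$ once $n\ge k+2$. If $H\not\equiv0$, then every common simple $1$-point of $F$ is a zero of $H$, so $\overline N_{1)}(r,1;F)\le N(r,0;H)\le T(r,H)+O(1)=N(r,H)+S(r,f)$; inserting this, the bound for $N(r,H)$, and the trivial estimate for $\overline N_{(2}(r,1;F)$ into the Second Main Theorem for $F$ on $0,1,\infty$ and collecting terms yields $\big(1-\varphi(k,n)\big)T(r,F)\le S(r,f)$ with $\varphi(k,n)<1$ precisely when $n\ge k+2$, a contradiction; hence $H\equiv0$.

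If $H\equiv0$, writing $H=\partial_u\log\!\big(\partial_u F/(F-1)^{2}\big)-\partial_u\log\!\big(\partial_u G/(G-1)^{2}\big)$ and integrating twice along the $u$-direction produces
\[
\frac{1}{F-1}=\frac{A}{G-1}+B,\qquad A\not\equiv0,
\]
where $A$ and $B$ are annihilated by $\partial_u$ (constants in the one-variable case). Comparing pole orders on the common $1$-set shows that, off the zeros of $A$ and the poles of $B$, the multiplicities of $F-1$ and $G-1$ agree; so the crux is to establish that $A$ and $B$ behave like small functions of $f$ (e.g.\ by passing to coordinates with $u=e_{1}$, in which $A$ and $B$ depend only on $z_{2},\dots,z_{m}$, and exploiting their expressions in terms of $F,G,\partial_u F,\partial_u G$). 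Granting this, if $B\equiv0$ then $G-1=A(F-1)$, i.e.\ $\partial_u^{k}(f^{n})-1=A(f^{n}-1)$; evaluating at the zeros of $f$ (where $F-1,G-1\to-1$) forces $A-1$ to vanish there, so $N(r,0;f)=S(r,f)$, whence $f=\pi e^{\beta}$ with $T(r,\pi)=S(r,f)$ and $\beta$ non-constant, and substituting $E:=e^{n\beta}$ turns the relation into $(R-A)E+(A-1)\equiv0$ modulo small factors, where $R=\partial_u^{k}(e^{n\beta})/e^{n\beta}$; a Borel-type linear-independence argument then gives $A\equiv1$, i.e.\ $F\equiv G$. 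If $B\not\equiv0$, then $G-1=A(F-1)/\big(1-B(F-1)\big)$, so $F$ attains the value $1+1/B$ only where $A$ vanishes and thus $\overline N(r,1+1/B;F)=S(r,f)$; together with the Second Main Theorem for $F$ on the three targets $0,1,1+1/B$ (the degenerate cases $1+1/B\equiv0,1$ handled directly) and the negligibility of the multiple $1$-points of $F$ forced by $G=\partial_u^{k}F$, one again obtains $N(r,0;f)=S(r,f)$; then $E=e^{n\beta}$ converts the relation into a quadratic identity $-RB\,E^{2}+(\cdots)E+(\cdots)\equiv0$ with small coefficients, so $RB\equiv0$, hence $R\equiv0$, i.e.\ $\partial_u^{k}(f^{n})\equiv0$, contradicting the hypothesis. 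Thus $F\equiv G$ in all cases, and the proof finishes as in Theorem~\ref{t2.1}.

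The step I expect to be the main obstacle is the case $H\equiv0$: in the several-variables setting the ``integration constants'' $A$ and $B$ are no longer constants but only $\partial_u$-closed functions --- precisely the type of extra term the authors already had to confront in Theorem~2.C --- and one must pin down their size before the Borel-type independence argument for $e^{n\beta}$ applies. A secondary delicate point is the numerology in the case $H\not\equiv0$: the weighted-differential-polynomial estimates for $P$ and $Q$, the multiplicity gains at the zeros of $f$, and the split of $\overline N(r,1;F)$ into simple and multiple parts must be combined so that the coefficient of $T(r,F)$ falls below $1$, which is exactly where $n\ge k+2$ is needed.
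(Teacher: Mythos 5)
Your reduction to $F\equiv G$ is where the argument breaks down, and the gap you yourself flag is a real one, not a technicality. In the branch $H\equiv 0$, integrating $\partial_u\log\bigl(\partial_u F/(F-1)^2\bigr)=\partial_u\log\bigl(\partial_u G/(G-1)^2\bigr)$ along the $u$-direction only produces functions $A,B$ with $\partial_u A=\partial_u B=0$; in $\mathbb{C}^m$ these are arbitrary meromorphic functions of the transverse variables, and nothing in your sketch bounds $T(r,A)$ or $T(r,B)$ by $o(T(r,f))$. The explicit formula $A=\partial_u F\,(G-1)^2\big/\bigl(\partial_u G\,(F-1)^2\bigr)$ gives at best $T(r,A)=O(T(r,f))$, which is useless for the Borel-type independence step and for the second main theorem with target $1+1/B$, both of which need genuinely small coefficients. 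This is exactly the phenomenon that forces the extra exceptional identity in Theorem 2.C, so it cannot be waved away by "passing to coordinates with $u=e_1$". Secondarily, in the branch $H\not\equiv 0$ the inequality $(1-\varphi(k,n))T(r,F)\le S(r,f)$ is asserted rather than derived: the pole contributions of $H$ from zeros of $\partial_u F=nf^{n-1}\partial_u f$ and of $\partial_u G$ lying off $\{F=1\}$ involve $N(r,0;\partial_u f)$-type terms that can be comparable to $T(r,f)$, and the bookkeeping that makes the final coefficient fall below $1$ for $n\ge k+2$ is precisely the content that is missing. So as it stands the proposal is a programme, not a proof.

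For comparison, the paper needs none of this machinery. It sets $H=\partial_u^k(F)/F$ with $F=f^n$; since $f$ is entire, poles of $H$ occur only at zeros of $f$, with multiplicity at most $k$ (a zero of $f$ of order $l$ gives a zero of $F$ of order $nl$ and of $\partial_u^k(F)$ of order $\ge nl-k$), so $N(r,H)\le k\overline N(r,0;f)$. IM sharing enters only through $\overline N(r,1;F)\le \overline N\bigl(r,F/(\partial_u^k(F)-F)\bigr)\le T(r,H)+O(1)\le k\overline N(r,0;f)+o(T(r,f))$, and the second main theorem for $F$ at $0,1,\infty$ then yields $nT(r,f)\le (k+1)T(r,f)+o(T(r,f))$, contradicting $n\ge k+2$ unless $H\equiv 1$, i.e. $F\equiv\partial_u^k(F)$; the conclusion is then read off as in Case 2 of Theorem \ref{t2.1}. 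Your endgame for the identity $F\equiv\partial_u^k(F)$ (restriction to lines $z_0+tu$, zero-freeness of $f$, and the fact that a zero-free solution of $y^{(k)}=y$ is a single exponential) is a pleasant alternative to the paper's use of Lemma \ref{l3}, but the heart of the theorem, namely getting to $F\equiv\partial_u^k(F)$ under IM sharing, is not established by your argument.
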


Following corollary can be derived from Theorem \ref{t2.3}.

\begin{cor}\label{c2.1} Let $f$ be a non-constant entire function in $\mathbb{C}^m$ such that $\ol N_{2)}(r,0;f)=o(T(r,f))$ and let $k, n$ be two positive integers such that $n\geq k$ and $\partial_u^k(f^n)\not\equiv 0$. If $f^n$ and $\partial_u^k(f^n)$ share $1$ IM, then the conclusion of Theorem \ref{t2.1} holds.
\end{cor}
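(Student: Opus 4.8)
The plan is to show that the hypotheses force $F:=f^n\equiv\partial_u^k(F)$; once this is in hand, the passage from $f^n\equiv\partial_u^k(f^n)$ to the stated exponential form of $f$ is identical to the corresponding part of the proof of Theorem \ref{t2.1}, so I may quote it. For $n\geq k+2$ the assertion is already contained in Theorem \ref{t2.3} (one does not even need the hypothesis on $\ol N_{2)}$), so the substantive range is $n\in\{k,k+1\}$; in fact I will run an argument valid uniformly for all $n\geq k$ and observe that the extra hypothesis $\ol N_{2)}(r,0;f)=o(T(r,f))$ is exactly what allows the classical threshold $n\geq k+2$ to drop to $n\geq k$.

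Assume, towards a contradiction, that $F\not\equiv\partial_u^k(F)$, and set $\psi:=1-\dfrac{\partial_u^k(F)}{F}\ (\not\equiv 0)$. Since $F$ and $\partial_u^k(F)$ share $1$ IM, at every $1$-point of $F$ one also has $\partial_u^k(F)=1$, hence $\psi=0$ there; thus $\ol N(r,1;F)\leq N(r,0;\psi)\leq T(r,\psi)+O(1)$. The poles of $\psi$ can occur only at zeros of $F$ — equivalently at zeros of $f$, as $F=f^n$ is entire — and at a zero of $F$ of order $s$ the function $\partial_u^k(F)$ vanishes to order at least $s-k$, so $\psi$ has there a pole of order at most $k$; consequently $N(r,\infty;\psi)\leq k\,\ol N(r,0;F)=k\,\ol N(r,0;f)$. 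Combining this with the higher-dimensional logarithmic derivative lemma, which yields $m(r,\psi)\leq m\!\left(r,\partial_u^k(F)/F\right)+O(1)=S(r,f)$ because $\partial_u^k=\sum_{|I|=k}\binom{k}{I}u^I\partial^I$, we get $T(r,\psi)\leq k\,\ol N(r,0;f)+S(r,f)$. Feeding $\ol N(r,\infty;F)=0$ and the above into the second main theorem for $F$ at $0,1,\infty$, and using $T(r,F)=nT(r,f)+O(1)$, we arrive at
\[ nT(r,f)\ \leq\ \ol N(r,0;f)+\ol N(r,1;F)+S(r,f)\ \leq\ (k+1)\,\ol N(r,0;f)+S(r,f). \]

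Next I invoke the hypothesis on $\ol N_{2)}$. Splitting $\ol N(r,0;f)=\ol N_{2)}(r,0;f)+\ol N_{(3}(r,0;f)$ and using $\ol N_{(3}(r,0;f)\leq\frac13 N_{(3}(r,0;f)\leq\frac13 N(r,0;f)\leq\frac13 T(r,f)+O(1)$, the assumption $\ol N_{2)}(r,0;f)=o(T(r,f))$ gives $\ol N(r,0;f)\leq\frac13 T(r,f)+S(r,f)$. Substituting into the displayed inequality yields
\[ \Bigl(n-\tfrac{k+1}{3}\Bigr)T(r,f)\ \leq\ S(r,f), \]
and since $n\geq k\geq 1$ forces $n-\frac{k+1}{3}\geq\frac{2k-1}{3}>0$ while $T(r,f)\to\infty$, this is absurd. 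Hence $F\equiv\partial_u^k(F)$, i.e. $f^n\equiv\partial_u^k(f^n)$, and the argument in the proof of Theorem \ref{t2.1} concludes that $f\equiv c\partial_u(f)$ with $(c/n)^k=1$ and, in the coordinate case, $f(z)=e^{cz_j+\alpha(z)}$.

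The genuinely delicate ingredients are the several-variables Nevanlinna inputs — the second main theorem and the logarithmic derivative estimate for the operator $\partial_u^k$ — which are available in the framework recalled in Section 1, together with the verification that $\partial_u^k(f^n)$ really does vanish to order at least $np-k$ at a zero of $f$ of multiplicity $p$; the borderline instance $np=k$ (possible only when $n=k$ at a simple zero of $f$) costs nothing, since such zeros are counted by $\ol N_{2)}(r,0;f)=o(T(r,f))$ and the pole order of $\psi$ is in any case $\leq k$. One should also dispose at the outset of the trivial possibility that $f$ is a polynomial, where a degree count shows $f^n$ and $\partial_u^k(f^n)$ cannot share $1$.
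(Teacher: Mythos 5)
Your proposal is correct and follows essentially the same route as the paper: you work with $\psi=1-\partial_u^k(F)/F$ where the paper works with $H=\partial_u^k(F)/F$, bound $\ol N(r,1;F)$ by $k\,\ol N(r,0;f)+o(T(r,f))$ via the pole order of this auxiliary function at zeros of $f$ together with the logarithmic derivative lemma, apply the second main theorem at $0,1,\infty$, and use $\ol N_{2)}(r,0;f)=o(T(r,f))$ to get $\ol N(r,0;f)\leq\frac13 T(r,f)+o(T(r,f))$, forcing $F\equiv\partial_u^k(F)$ and reducing to Case 2 of Theorem \ref{t2.1}. The only differences are cosmetic (your extra remarks on the polynomial case and the borderline multiplicity), so no further comparison is needed.
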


\begin{note} If $k \geq 2$, then in Corollary \ref{c2.1} instead of $\ol N_{2)}(r,0;f)=o(T(r,f))$ we can assume $N_{1)}(r,0;f)=o(T(r,f))$.
\end{note}

Now, in view of Theorems Theorems 2.G and  \ref{t2.3}, it is natural to pose the following question:
\begin{ques} If $m\geq 2$ and $n\geq k+1$, is it possible to establish an analogue of Theorem \ref{t2.3} in the setting of several complex variables?

\end{ques}

In this paper, we provide a partial answer to this question in the case $k=1$, by proving the following result.

\begin{theo}\label{t2.2} Let $f$ be a non-constant entire function in $\mathbb{C}^m$ and $n\geq 2$ be an integer such that $\partial_u(f^n)\not\equiv 0$. If $f^n$ and $\partial_u(f^n)$ share $1$ IM, then $f\equiv n\partial_u(f)$. In particular if $u_j=1$ and $u_i=0$ for $i\neq j$, then 
\[f(z)=e^{cz_j+\alpha(z)},\]
where $\alpha(z)=\alpha(z_1,\ldots,z_{j-1}, z_{j+1},\ldots,z_m)$ is a non-constant entire function in $\mathbb{C}^{m-1}$.
\end{theo}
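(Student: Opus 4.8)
The plan is to adapt the one-variable proof of the extremal IM case $n=k+1$ with $k=1$ (in the spirit of the refinements of Zhang and Yang) to the Nevanlinna calculus on $\mathbb{C}^m$ recalled above, exploiting a simplification peculiar to $k=1$. Set $F=f^{n}$ and $G=\partial_u(f^{n})=nf^{\,n-1}\partial_u(f)$; both are entire, $F$ is non-constant, and $G\not\equiv 0$ by hypothesis. The logarithmic derivative lemma (valid for $\partial_u$, a $\mathbb{C}$-linear combination of the $\partial_{z_i}$, since $m(r,\partial_{z_i}h/h)=S(r,h)$) gives $T(r,G)\le T(r,F)+S(r,F)$, while $T(r,F)=n\,T(r,f)$. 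Since $F-G=f^{\,n-1}\bigl(f-n\partial_u(f)\bigr)$, the assertion $f\equiv n\partial_u(f)$ is \emph{equivalent} to $F\equiv G$, which I take as the target. The crucial observation is: because $F$ and $G=\partial_u(F-1)$ share $1$ IM, \emph{every zero of $F-1=f^{n}-1$ is simple} --- a zero of order $\ge 2$ of $F-1$ would be a zero of $\partial_u(F-1)=G$, contradicting $G=1$ there. Hence each $f-\zeta$ with $\zeta^{n}=1$ has only simple zeros, $\partial_u(f^{n})=1$ at every zero of $f^{n}-1$, and $n\partial_u(f)-f$ vanishes on $\{f^{n}=1\}$ and to order $\ge p-1$ at a zero of $f$ of multiplicity $p$. (For $k\ge 2$ this fails, which is why the general Theorem \ref{t2.3} asks $n\ge k+2$.)

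Next introduce the auxiliary function attached to the pair $(F,\partial_uF)$,
\[
H=\frac{\partial_u^{2}F}{\partial_u F}-\frac{2\,\partial_u F}{F-1}-\frac{\partial_u^{3}F}{\partial_u^{2}F}+\frac{2\,\partial_u^{2}F}{\partial_u F-1},
\]
so that $m(r,H)=S(r,F)$ by the logarithmic derivative lemma. If $H\equiv 0$, then $\partial_u\log\!\Bigl(\dfrac{\partial_u F\,(\partial_uF-1)^{2}}{\partial_u^{2}F\,(F-1)^{2}}\Bigr)=0$, and integrating along the complex lines $t\mapsto z_0+tu$ gives $\dfrac{1}{F-1}=\dfrac{A}{\partial_uF-1}-B$ with $\partial_uA=\partial_uB=0$, equivalently $\partial_uF=\dfrac{1+(A+B)(F-1)}{1+B(F-1)}$. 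The sub-case $A\equiv 1$, $B\equiv 0$ is exactly $F\equiv\partial_uF$; the remaining sub-cases are excluded because, along each $u$-line, they force $f^{n}-1$ to be affine or a constant times an exponential --- so its zeros are simple --- and at the same time force $f$ (entire, hence, being constant-times-exponential along $u$-lines, omitting at most the value $0$) to attain some $n$-th root of unity at a point where $\partial_uF-1$ does not vanish, contradicting the sharing; the remaining options make $\partial_u(f^{n})\equiv 0$ or $f$ constant. Thus $H\equiv 0$ already yields $F\equiv G$.

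If $H\not\equiv 0$, then (as in one variable) $H$ is holomorphic at, and vanishes at, every common $1$-point where both $F-1$ and $G-1$ are simple; since $F-1$ is \emph{always} simple here, the first main theorem gives $\ol N_{1)}(r,1;F)\le N(r,0;1/H)\le N(r,H)+S(r,F)$, while the (at most double) poles of $H$ lie only over the zeros of $\partial_uF$ and $\partial_u^{2}F$ and over the rare $1$-points where $G-1$ is not simple, so $N(r,H)\le C\bigl(\ol N(r,0;\partial_uF)+\ol N(r,0;\partial_u^{2}F)\bigr)+S(r,F)$. Combining $\ol N(r,1;F)\le\ol N_{1)}(r,1;F)+\ol N(r,0;\partial_u^{2}F)$ with the second main theorem for the entire $F$ and targets $0,1$ (so $\ol N(r,\infty;F)=0$), and using $\ol N(r,0;F)=\ol N(r,0;f)\le\tfrac1n T(r,F)+O(1)$, $\ol N(r,0;\partial_u^{j}F)\le\ol N(r,0;f)+\sum_{i\le j}N(r,0;\partial_u^{i}f)$ and $N(r,0;\partial_u^{i}f)\le T(r,f)+S(r,f)$, one reaches $T(r,F)\le c\,T(r,F)+S(r,F)$ with $c<1$ whenever $n\ge 2$, a contradiction. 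Hence $H\equiv 0$, so $F\equiv G$, i.e. $f\equiv n\partial_u(f)$; and when $u_j=1$, $u_i=0$ $(i\ne j)$ one integrates the PDE $f\equiv n\,\partial_{z_j}(f)$ (equivalently $\partial_{z_j}(f^{n})\equiv f^{n}$) to obtain $f(z)=e^{\,z_j/n+\alpha(z_1,\dots,\widehat{z_j},\dots,z_m)}$ with $\alpha$ entire.

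The hard part is the bookkeeping at the extremal exponent $n=k+1=2$: there $\ol N(r,0;f)$ is bounded only by $\tfrac12 T(r,F)$, so the counting above is tight and the contributions $N(r,0;\partial_uf)$, $N(r,0;\partial_u^{2}f)$ must be controlled more sharply --- using that $f^{n}-1$ has only simple zeros, that $n\partial_uf-f$ over-vanishes on $\{f=0\}$, and the exact shape $\partial_u^{2}(f^{n})=n(n-1)f^{\,n-2}(\partial_uf)^{2}+nf^{\,n-1}\partial_u^{2}f$ --- in effect reducing to a Clunie/Riccati-type estimate for $f$, or to first upgrading the IM sharing to (weak) CM and invoking Theorem \ref{t2.1}.
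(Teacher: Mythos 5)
Your overall architecture (reduce to $F\equiv\partial_u F$, note that IM sharing forces every $1$-point of $F-1$ to be simple because a multiple point kills the linear part and hence $\partial_u F$, then split on an auxiliary function being $\equiv 0$ or not) is sound, and for $n\ge 3$ a counting of this kind does close. But the theorem's only new content is the extremal case $n=2$ (for $n\ge k+2=3$ the IM result is already Theorem \ref{t2.3}), and precisely there your argument has a genuine gap that you yourself flag: with $T(r,F)=2T(r,f)$ and pole contributions of $H$ coming from $\ol N(r,0;\partial_u F)$ and $\ol N(r,0;\partial_u^2 F)$, the crude bounds $N(r,0;\partial_u^i f)\le T(r,f)+S(r,f)$ do not give a coefficient $c<1$; the claimed inequality $T(r,F)\le c\,T(r,F)+S(r,F)$ is asserted, not derived, and the final sentence ("reducing to a Clunie/Riccati-type estimate\ldots or upgrading IM to CM") is a plan, not a proof. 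The paper's proof of exactly this case is long and delicate: it works with $\varphi=\partial_u(F)\,(F-\partial_u F)/(F(F-1))$, shows $\varphi$ is small, deduces $N_{(2}(r,0;f)=o(T(r,f))$ and $T(r,f)=N_{1)}(r,0;f)+o(T(r,f))$, then builds two further small auxiliary functions $\phi$ and $\psi$ from differentiated identities, eliminates all sub-cases until it is forced into $\varphi=-\tfrac14$ and $(4\partial_u f-f)^2=1$, and only then upgrades the sharing to CM and invokes Theorem \ref{t2.1}. None of this bookkeeping is replaced by anything concrete in your proposal.

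A second, independent problem is your treatment of $H\equiv 0$. Integrating $H$ along the lines $t\mapsto z_0+tu$ produces quantities $A,B$ with $\partial_uA=\partial_uB=0$ only; in $\mathbb{C}^m$ these are not constants, need not be small functions of $f$, and need not even have controlled growth, so the classical one-variable case analysis of the M\"obius relation $\frac{1}{F-1}=\frac{A}{\partial_uF-1}-B$ (which relies on $A,B$ being constants) does not transfer as stated. Your exclusion of the nontrivial sub-cases ("along each $u$-line they force $f^n-1$ to be affine or a constant times an exponential\ldots") is a fibrewise assertion with no mechanism for gluing the line-by-line conclusions into a statement about the Nevanlinna functions of $f$ on $\mathbb{C}^m$; the paper sidesteps this entirely, since in its Case 2 the vanishing of $\varphi$ immediately gives $F\equiv\partial_uF$ (the alternative $\partial_uF\equiv 0$ being excluded by hypothesis). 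As it stands, both the $H\equiv 0$ branch and the decisive $n=2$ counting would need to be rebuilt before the proposal constitutes a proof.
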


The following theorems are natural extensions of Theorems 2.I and 2.J, respectively, generalizing the classical results from the setting of one complex variable to the richer framework of several complex variables.

\begin{theo}\label{t2.4} Let $f$ be a non-constant meromorphic function in $\mathbb{C}^m$ and let $k$ and $n$ be two positive integers such that $n\geq k+2$ and $\partial_u^k(f^n)\not\equiv 0$. If $f^n$ and $\partial_u^k(f^n)$ share $1$ CM, then $f^n\equiv \partial_u(f^n)$. In particular if $f$ and $\partial_u(f)$ share $\infty$ CM, then $f\equiv c\partial_u(f)$, where $(c/n)^k=1$. Furthermore if $u_j=1$ and $u_i=0$ for $i\neq j$, then 
\[f(z)=e^{cz_j+\alpha(z)},\]
where $\alpha(z)=\alpha(z_1,\ldots,z_{j-1}, z_{j+1},\ldots,z_m)$ is a non-constant entire function in $\mathbb{C}^{m-1}$.
\end{theo}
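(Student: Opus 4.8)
The plan is to write $F=f^{n}$ and prove $f^{n}\equiv\partial_u^{k}(f^{n})$; the remaining assertions will then follow from the already established entire case. Since $F$ and $\partial_u^{k}F$ share $1$ CM, the meromorphic function $(\partial_u^{k}F-1)/(F-1)$ has no zeros, and because $\partial_u$ raises pole multiplicities one step at a time it has a pole of order exactly $k$ along the polar set of $f$, all other irregularities sitting on analytic sets of codimension $\ge2$ and hence not affecting divisors. Solving the first Cousin problem on $\mathbb{C}^{m}$, choose an entire function $\Pi$ whose zero divisor is the reduced polar divisor of $f$; then
\[
\frac{\partial_u^{k}F-1}{F-1}=\frac{e^{\gamma}}{\Pi^{k}}=:\lambda,\qquad\text{so that}\qquad \partial_u^{k}F-F=(F-1)(\lambda-1),
\]
for some entire $\gamma$. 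The logarithmic derivative lemma for directional derivatives on $\mathbb{C}^{m}$ (used in the proofs of Theorems \ref{t2.1} and \ref{t2.3}) gives $T(r,\partial_u^{j}F)=O(T(r,F))$ and $m(r,\partial_u^{j}F/F)=S(r,F)$, so $S(r,\partial_u^{k}F)=S(r,F)$; it now suffices to prove $\lambda\equiv1$.

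Consider $\psi:=\dfrac{\partial_u^{k+1}F}{\partial_u^{k}F-1}-\dfrac{\partial_uF}{F-1}=\partial_u\log\lambda$. By the logarithmic derivative lemma $m(r,\psi)=S(r,F)$; moreover, at a common $1$-point of $F$ and $\partial_u^{k}F$ the two summands have simple poles with equal residues, which cancel, so the poles of $\psi$ lie only over the polar set of $f$ and are simple, whence $T(r,\psi)\le\overline{N}(r,\infty;f)+S(r,F)$. At a zero $z_{0}$ of $f$ of multiplicity $p$, since $n\ge k+2$ we have that $F$ vanishes to order $np$ and $\partial_u^{k}F$ to order $np-k\ge n-k\ge2$, while $F-1$ and $\partial_u^{k}F-1$ are nonzero there, so $\psi$ has a zero of order $np-k-1\ge1$ at $z_{0}$; comparing valence functions yields
\[
n\,N(r,0;f)-(k+1)\,\overline{N}(r,0;f)\ \le\ \overline{N}(r,\infty;f)+S(r,F).
\]
From $\partial_u^{k}F-F=(F-1)(\lambda-1)$ one sees likewise that $\lambda-1$ has a zero of order $np-k$ at each such $z_{0}$, so $N(r,1;\lambda)\ge n\,N(r,0;f)-k\,\overline{N}(r,0;f)$, while $\lambda$ is zero-free and has poles (of order $k$) only over the polar set of $f$.

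Assume now $\lambda\not\equiv1$. Combining the above with the Second Main Theorem on $\mathbb{C}^{m}$ for $F$ with the targets $0,1,\infty$, a Milloux-type estimate for $F$ and $\partial_u^{k}F$ (in which the CM sharing is used to absorb the counting of common $1$-points against $\overline{N}(r,0;f)$, $\overline{N}(r,\infty;f)$ and a ramification term), and the bounds $\overline{N}(r,0;F)\le\frac1n N(r,0;F)$ and $\overline{N}(r,\infty;F)\le\frac1n N(r,\infty;F)$, one is led, precisely because $n\ge k+2$, to an inequality $(1-o(1))\,T(r,F)\le\kappa\,T(r,F)+S(r,F)$ with $\kappa<1$ --- a contradiction. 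Hence $\lambda\equiv1$, that is $f^{n}\equiv\partial_u^{k}(f^{n})$. This identity is incompatible with $f$ having a pole (the two sides would have pole orders $np$ and $np+k$) or a zero (zero orders $np$ and $np-k$), so $f$ is entire and zero-free; in particular $f$ and $\partial_u f$ share $\infty$ CM, and Theorem \ref{t2.1}, applicable since $n\ge k+2>k$, gives $f\equiv c\,\partial_u f$ with $(c/n)^{k}=1$ and, when $u_j=1$ and $u_i=0$ for $i\ne j$, the representation $f(z)=e^{cz_j+\alpha(z)}$ with $\alpha=\alpha(z_{1},\dots,z_{j-1},z_{j+1},\dots,z_{m})$ a non-constant entire function.

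The main obstacle I anticipate is the estimate in the third step: arranging the Second-Main-Theorem and Milloux-type inequalities tightly enough that the bound $n\ge k+2$ --- and not merely something like $n>2k+2$ --- already forces the contradiction, which is exactly where the CM sharing must be used in full strength and which amounts to the several-variables transcription of Li's one-variable argument underlying Theorem 2.I. The points needing careful verification are that the directional-derivative logarithmic derivative lemma and the Second Main Theorem hold in the required forms on $\mathbb{C}^{m}$, that the auxiliary entire factor $\Pi$ with the prescribed reduced divisor exists, and that the ``generic'' local multiplicity computations are legitimate because divisors on $\mathbb{C}^{m}$ are insensitive to analytic sets of codimension $\ge2$.
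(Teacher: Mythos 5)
Your proposal sets up essentially the paper's auxiliary machinery (your $\psi=\partial_u\log\lambda$ is, up to replacing $\partial_{z_i}$ by $\partial_u$, the paper's function $S=\frac{\partial_{z_i}(\partial_u^kF)}{\partial_u^kF-1}-\frac{\partial_{z_i}F}{F-1}$, and your inequality $n\,N(r,0;f)-(k+1)\overline{N}(r,0;f)\le \overline{N}(r,\infty;f)+S(r,F)$ is the paper's $(n-k-1)N(r,0;f)\le\overline{N}(r,\infty;f)+o(T(r,f))$), but the decisive step is not carried out. In the paper the contradiction does not come from a vague ``Milloux-type estimate'': it comes from the separate, explicitly proved bound on the $1$-points, namely $\overline{N}(r,1;F)\le T(r,H)+O(1)\le N(r,\infty;H)+o(T(r,f))\le k\overline{N}(r,\infty;f)+k\overline{N}(r,0;f)+o(T(r,f))$ with $H=\partial_u^k(F)/F$, obtained from the first main theorem, the logarithmic derivative lemma, and a local multiplicity computation for $\mu^{\infty}_H$; this is then combined with the zero-counting inequality and the second main theorem for $F$ at $0,1,\infty$. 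Your third step merely asserts that some combination of SMT and a Milloux-type inequality yields $\kappa<1$ ``precisely because $n\ge k+2$'', and you yourself flag this as the anticipated obstacle. Since this is exactly where the hypothesis $n\ge k+2$ has to do its work, the proof is incomplete at its crux.

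There is also a genuine error at the end, caused by transplanting one-variable order arithmetic to $\mathbb{C}^m$: differentiation along $u$ need not raise (or lower) the order along a zero or polar hypersurface to which $u$ is tangent. Thus your claim that $\lambda$ has a pole of order \emph{exactly} $k$ along the polar set is only an upper bound in general, and, more seriously, your conclusion that $F\equiv\partial_u^k F$ is incompatible with $f$ having zeros or poles is false: for $u=(1,0,\dots,0)$ and $f(z)=e^{z_1/n}g(z_2,\dots,z_m)$ with $g$ any non-constant meromorphic function, one has $f^n\equiv\partial_u^k(f^n)$ while $f$ has zeros and poles. This is precisely why the theorem (and the paper's proof, which at this point reduces to Case 2 of Theorem \ref{t2.1}) concludes $f\equiv c\,\partial_u(f)$ only under the \emph{additional} hypothesis that $f$ and $\partial_u(f)$ share $\infty$ CM; your unconditional ``$f$ is entire and zero-free'' overshoots what is true and what is claimed. (Your use of a Cousin-type factorization $\lambda=e^{\gamma}/\Pi^{k}$ to avoid the paper's split into the linearly independent/dependent cases for $F-1$ and $\partial_u^kF-1$ is a reasonable alternative organization, but it does not repair either of the two gaps above.)
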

\begin{theo}\label{t2.5} Let $f$ be a non-constant meromorphic function in $\mathbb{C}^m$ and let $k, n$ be two positive integers such that $n\geq 2k+3$ and $\partial_u^k(f^n)\not\equiv 0$. If $f^n$ and $\partial_u^k(f^n)$ share $1$ IM, then the conclusion of Theorem \ref{t2.4} holds.
\end{theo}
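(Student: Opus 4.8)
The plan is to reproduce the architecture of the proof of Theorem~\ref{t2.4}, now paying the standard price for replacing counting-with-multiplicity by counting-without-multiplicity, which is precisely why the lower bound on $n$ deteriorates from $k+2$ to $2k+3$. Write $F=f^{n}$ and $G=\partial_{u}^{k}(f^{n})$; then $F$ and $G$ are non-constant meromorphic functions on $\mathbb{C}^{m}$ sharing $1$ IM, with $G\not\equiv 0$, $\partial_u(F)\not\equiv0$, $\partial_u(G)\not\equiv0$ (the latter because, $n$ being $>k$, a power $f^{n}$ of a non-constant $f$ cannot have a $u$-constant $k$-th directional derivative), and $T(r,G)\leq(k+1)T(r,f)+S(r,f)$. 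Throughout I would use the several-variables logarithmic derivative lemma and second main theorem for directional derivatives (available in the framework recalled above, cf.\ \cite{HY1}), together with the elementary but decisive fact that $F-1=\prod_{\zeta^{n}=1}(f-\zeta)$, so a $1$-point of $F$ of multiplicity $\mu$ is a $\zeta$-point of $f$ of the same multiplicity for exactly one $n$-th root of unity $\zeta$. Applying the second main theorem to $f$ against the $n+2$ targets $\{\zeta:\zeta^{n}=1\}\cup\{0,\infty\}$ then yields
\[
(n-1)\,T(r,f)\leq \ol N(r,0;f)+\ol N(r,\infty;f)+\ol N(r,1;F)+S(r,f),
\]
which is the inequality the rest of the argument must be played against.

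Next I would introduce the standard auxiliary function (in the spirit of Lahiri)
\[
\Phi=\left(\frac{\partial_{u}^{2}(F)}{\partial_{u}(F)}-\frac{2\,\partial_{u}(F)}{F-1}\right)
-\left(\frac{\partial_{u}^{2}(G)}{\partial_{u}(G)}-\frac{2\,\partial_{u}(G)}{G-1}\right),
\]
which is meaningful by the non-vanishing just noted and which satisfies $m(r,\Phi)=S(r,f)$ by the logarithmic derivative lemma. The proof splits according to whether $\Phi\equiv 0$. If $\Phi\equiv 0$, integrating the resulting exact equation gives a relation $\dfrac{1}{G-1}=\dfrac{A}{F-1}+B$ binding $F$ and $G$, with $A\not\equiv 0$ and $A,B$ constant along the $u$-direction. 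If $B\not\equiv 0$, the poles of $G$ could occur only at the $\tfrac{B-A}{B}$-points of $F$, whereas $G=\partial_{u}^{k}(F)$ can have poles only where $F$ does; combining this with a pole-order count (generically a pole of $f$ of order $p$ forces a pole of $f^{n}$ of order $np$ and a pole of $\partial_{u}^{k}(f^{n})$ of order $np+k$) and with Picard/second-main-theorem considerations for $f$ (forcing $f$ to be both zero-free and pole-free, hence $f=e^{g}$, then using $\partial_{u}^{k}(f^{n})\not\equiv0$) one gets a contradiction, so $B\equiv 0$. Then $G-1=\tfrac1A(F-1)$; since every zero of $F=f^{n}$ has multiplicity $\geq n>k$ and is therefore a zero of $\partial_{u}^{k}(f^{n})=G$, the affine relation forces $A\equiv 1$ (after dispatching the zero-free subcase by a short direct argument with $\partial_{u}^{k+1}(f^{n})=A^{-1}\partial_{u}(f^{n})$), i.e.\ $G\equiv F$, which is the conclusion of Theorem~\ref{t2.4}; the explicit forms of $f$ then follow exactly as there.

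The remaining and principal case is $\Phi\not\equiv 0$. At a simple $1$-point of $F$ (automatically a $1$-point of $G$) one checks directly that $\Phi$ vanishes, whence
\[
\ol N_{1)}(r,1;F)\leq N(r,0;\Phi)+S(r,f)\leq T(r,\Phi)+S(r,f)\leq N(r,\Phi)+S(r,f),
\]
and $N(r,\Phi)$ is controlled by the evident poles of $\Phi$: the multiple $1$-points of $F$ (counted once), the poles of $f$, and the zeros of $\partial_{u}(F)$ and $\partial_{u}(G)$ not already accounted for. Estimating each term against $T(r,f)$ — using $\partial_{u}(f^{n})=nf^{n-1}\partial_{u}(f)$, $\ol N(r,0;f)\leq \tfrac1n N(r,0;F)\leq T(r,f)+S(r,f)$, the IM sharing to bound $\ol N_{(2}(r,1;F)$ through $\ol N(r,0;\partial_{u}F)$, and the logarithmic derivative lemma to bound $\ol N(r,0;\partial_{u}f)$ — and substituting into the second main theorem inequality of the first paragraph, one is forced to $n\leq 2k+2$, contrary to $n\geq 2k+3$. (Alternatively the whole IM case can be packaged as a corollary of a single several-variables sharing theorem for $F$ versus $\partial_{u}^{k}(F)$ under a smallness hypothesis on $\ol N(r,0;F)+\ol N(r,\infty;F)$, which $F=f^{n}$ meets once $n$ is large, paralleling \cite{JLZ}.)

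The main obstacle I expect is exactly this bookkeeping in the case $\Phi\not\equiv 0$: one must drive the estimate down to the threshold $2k+3$, which demands care with the multiple-$1$-point term $\ol N_{(2}(r,1;F)$ and with the behaviour of poles under $\partial_{u}$ — in several variables $\partial_{u}$ need not raise the order of every component of the polar divisor, so one has to work on the generic locus and exploit $\partial_{u}^{k}(f^{n})\not\equiv 0$. A secondary point is to make sure the logarithmic derivative lemma and the second main theorem with small targets are invoked for $\partial_{u}$ on $\mathbb{C}^{m}$ in precisely the form required, and that the ``constant along the $u$-direction'' information coming from the integration in the case $\Phi\equiv 0$ is enough to produce the stated normal forms of $f$ (the step where one passes from $f\equiv c\,\partial_u(f)$ to $f(z)=e^{cz_j+\alpha(z)}$).
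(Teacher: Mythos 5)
Your route is genuinely different from the paper's, and in its present form it also has a real gap. The paper's proof of Theorem \ref{t2.5} is essentially three lines: with $H=\partial_u^k(F)/F$ as in (\ref{sb.1}), the IM sharing gives $\parallel\,\ol N(r,1;F)\leq N(r,H)+o(T(r,f))$ exactly as in (\ref{sb.2}); the local-coordinate divisor computation already carried out for Theorem \ref{t2.4} gives $\mu^{\infty}_H\leq k\mu^{\infty}_{f,1}+k\mu^{0}_{f,1}$ (inequality (\ref{sbb.8})), hence $\parallel\,\ol N(r,1;F)\leq k\ol N(r,f)+k\ol N(r,0;f)+o(T(r,f))$, which is (\ref{sbb.9a}); and the second main theorem (Lemma \ref{l1}) for $F$ at $0,1,\infty$ then yields $\parallel\, nT(r,f)\leq(2k+2)T(r,f)+o(T(r,f))$, forcing $H\equiv 1$, after which everything reduces to Case 2 of Theorem \ref{t2.1}. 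No second-order auxiliary function and no case analysis on linear (in)dependence of $F-1$ and $G-1$ are needed; all the work is recycled from the proofs of Theorems \ref{t2.3} and \ref{t2.4}.

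Your plan instead transplants the one-variable Lahiri-type function $\Phi$ (as in \cite{JLZ}) to $\mathbb{C}^m$. That is a legitimate strategy in principle, but the case $\Phi\not\equiv 0$ --- which is where the entire content of the theorem sits --- is not actually executed: you list the terms to be estimated ($\ol N_{(2}(r,1;F)$, the zeros of $\partial_u(F)$ and $\partial_u(G)$, the poles of $f$) and then assert that substitution ``forces $n\leq 2k+2$.'' Extracting the threshold $2k+3$ from those terms is precisely the delicate step, and in several variables it requires the local-coordinate control of how $\partial_u$ acts on the zero and polar divisors (the analogue of what the paper proves as (\ref{sb.2a}) and (\ref{sbb.8})); without that computation the argument is a roadmap rather than a proof. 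Two smaller slips: for $G=\partial_u^k(f^n)$ one has $T(r,G)=O\big((n+k)T(r,f)\big)$, not $T(r,G)\leq (k+1)T(r,f)+o(T(r,f))$; and the second main theorem applied to $f$ with the $n+2$ targets $\{\zeta:\zeta^n=1\}\cup\{0,\infty\}$ gives $nT(r,f)$ on the left-hand side, not $(n-1)T(r,f)$.
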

The table below presents all Theorems and one Corollary in a concise format, summarizing their statements and conclusions. \newpage 
\begin{table}[h!]
	\centering
	\setlength{\tabcolsep}{5pt} 
	\renewcommand{\arraystretch}{1.5} 
	\begin{tabular}{|c|p{7.0cm}|p{5cm}|}
		\hline
		Theorem No. & \hspace{5cc}Statement & \hspace{1cc}Conclusion / Result \\
		\hline
		Theorem 2.1 & $f$ non-constant entire in $\mathbb{C}^m$; $k,n$ positive integers; $n \ge k+1$; $\partial_u^k(f^n) \not\equiv 0$; $f^n$ and $\partial_u^k(f^n)$ share 1 CM. & $f \equiv c \, \partial_u(f)$, $c \in \mathbb{C}\setminus\{0\}$, $(c/n)^k = 1$. If $u_j=1$ and $u_i=0$ for $i\neq j$: $f(z)=e^{c z_j + \alpha(z)}$, $\alpha(z)$ non-constant entire in $\mathbb{C}^{m-1}$. \\
		\hline
		Theorem 2.2 & $f$ non-constant entire in $\mathbb{C}^m$; $k,n$ positive integers; $n \ge k+2$; $\partial_u^k(f^n)\not\equiv 0$; $f^n$ and $\partial_u^k(f^n)$ share 1 IM. & Conclusion of Theorem 2.1 holds. \\
		\hline
		Corollary 2.1 & $f$ non-constant entire in $\mathbb{C}^m$; ${\overline N_{2)}(r,0;f)} = o(T(r,f))$; $k,n$ positive integers; $n \ge k$; $\partial_u^k(f^n)\not\equiv 0$; $f^n$ and $\partial_u^k(f^n)$ share 1 IM. & Conclusion of Theorem 2.1 holds. \\
		\hline
		Theorem 2.3 & $f$ non-constant entire in $\mathbb{C}^m$; $n \ge 2$; $\partial_u(f^n)\not\equiv 0$; $f^n$ and $\partial_u(f^n)$ share 1 IM. & $f \equiv n \, \partial_u(f)$. If $u_j=1$ and $u_i=0$ for $i\neq j$: $f(z)=e^{c z_j + \alpha(z)}$, $\alpha(z)$ non-constant entire in $\mathbb{C}^{m-1}$. \\
		\hline
		Theorem 2.4 & $f$ non-constant meromorphic in $\mathbb{C}^m$; $k,n$ positive integers; $n \ge k+2$; $\partial_u^k(f^n)\not\equiv 0$; $f^n$ and $\partial_u^k(f^n)$ share 1 CM. & $f^n \equiv \partial_u(f^n)$; if $f$ and $\partial_u(f)$ share $\infty$ CM, then $f \equiv c\,\partial_u(f)$, $(c/n)^k = 1$. If $u_j=1$ and $u_i=0$ for $i\neq j$: $f(z)=e^{c z_j + \alpha(z)}$, $\alpha(z)$ non-constant entire in $\mathbb{C}^{m-1}$. \\
		\hline
		Theorem 2.5 & $f$ non-constant meromorphic in $\mathbb{C}^m$; $k,n$ positive integers; $n \ge 2k+3$; $\partial_u^k(f^n)\not\equiv 0$; $f^n$ and $\partial_u^k(f^n)$ share 1 IM. & Conclusion of Theorem 2.4 holds. \\
		\hline
	\end{tabular}\vspace{1.5cc}
	\caption{Theorems and Corollary.}
\end{table}
The following ensures the necessity of the condition ``$n\geq k+1$'' in Theorem \ref{t2.1} and Corollary \ref{c2.1}.
\begin{exm}\label{exm2.1} Let 
\[f(z)=2e^{\frac{z_1+\ldots +z_m}{2}}-1\]
and $k=n=1$. Let us choose $u=(u_1,\ldots,u_m)$ in such a way that $u_1+\ldots+u_m=1$. Therefore $\partial_u(f)=e^{\frac{z_1+\ldots +z_m}{2}}$. It is easy to verify that $f$ and $\partial_u(f)$ share $1$ CM, but conclusion of Theorem \ref{t2.1} cease to hold.
\end{exm}

\begin{exm}
	Let $f(z) = e^{2(z_1 + \dots + z_m)} +\frac{1}{2}, \quad n = k = 1$, $u=(u_1,\ldots,u_m)$, such that $u_1+\ldots+u_m=1$.
	Then the directional derivative is
	$\partial_u f = \frac{\partial f}{\partial z_1} = 2e^{z_1 + \dots + z_m}.$
	Observe that $f$ and $\partial_u f$ share the value $1$ CM, but conclusion of  Theorem \ref{t2.1} dose not hold.
		\end{exm}
		The next examples verify the necessity for $k=2$.
		\begin{exm} Take $n=1$. Let $f(z) = e^{\sqrt{2}\,(z_1 + z_2 + \cdots + z_m)} + \frac{1}{2}$, $\qquad z = (z_1,\ldots,z_m) \in \mathbb{C}^m$,  $m \ge 1$. Choose $u = (1,0,0,\ldots,0)$. Then $	\partial_u f = \sqrt{2}\, e^{\sqrt{2}(z_1+\cdots+z_m)}, \qquad
			\partial_u^2 f = 2\, e^{\sqrt{2}(z_1+\cdots+z_m)} =  2f - 1.	$
			Hence $\partial_u^2 f - 1 = 2(f - 1)$ and so it follows that $f$ and $\partial_u^2 f$ share the value $1$ CM.
			However, Theorem \ref{t2.1} fails.
		\end{exm} 

		\begin{exm}	Take $n = 2$. Let	$f(z) = \sqrt{\frac{5}{2}}\;\sin z_1$, $u = (1,0,\dots,0)$, $m\ge 2$. 	
			Then $f^{2} =\frac{5}{2} \sin^2 z_1$, so that  $\partial_u^2(f^2) = 5\cos 2z_1 = 5-10\sin^2 z_1.$ Thus $f^2$ and $\partial_u^2(f^2)$ share $1$ CM.
			However, the conclusion of Theorem \ref{t2.1} cease to hold.			
		\end{exm}	
		\begin{exm}	Take $n = 2$. Let $f(z) = e^{z_1+z_2+\ldots+z_m}+\frac{3}{8}e^{-(z_1+z_2+\ldots+z_m)}$, $u = (1,0,\dots,0)$, $m\ge 2$. 	
		Then $f^{2} =e^{2(z_1+z_2+\ldots+z_m)}+\frac{9}{64}\;e^{-2(z_1+z_2+\ldots+z_m)}+\frac{3}{4}$, so that  $\partial_u^2(f^2) = 4e^{2(z_1+z_2+\ldots+z_m)}+\frac{9}{16}\;e^{-2(z_1+z_2+\ldots+z_m)}$. Thus $f^2$ and $\partial_u^2(f^2)$ share $1$ CM. 
		However, the conclusion of Theorem \ref{t2.1} cease to hold.		
		\end{exm}			
The following examples show that Theorem \ref{t2.1} does not hold for a non-constant meromorphic function in $\mathbb{C}^m$.
\begin{exm} Let 
\[f(z)=\frac{z_1+z_2+\ldots+z_m}{1-e^{-(z_1+z_2+\ldots+z_m)}},\]
$n=k=1$ and $u=(1,0,\ldots,0)$.
Note that
\[\partial_u(f(z))=\frac{\partial f(z)}{\partial z_1}=\frac{1-(z_1+z_2+\ldots+z_m+1)e^{-(z_1+z_2+\ldots+z_m)}}{\left(1-e^{-(z_1+z_2+\ldots+z_m)}\right)^2}\]
and so
\beas f(z)-1=\frac{z_1+z_2+\ldots+z_m-1+e^{-(z_1+z_2+\ldots+z_m)}}{1-e^{-(z_1+z_2+\ldots+z_m)}},\eeas
\beas \partial_u(f(z))-1=-e^{-(z_1+z_2+\ldots+z_m)}\;\;\frac{z_1+z_2+\ldots+z_m-1+e^{-(z_1+z_2+\ldots+z_m)}}{\left(1-e^{-(z_1+z_2+\ldots+z_m)}\right)^{2}}.\eeas

Clearly $f$ and $\partial_u(f)$ share $1$ CM, but $f\not\equiv \partial_u(f)$.
\end{exm}

\begin{exm}
	Let 
	$
	f(z) = \frac{z_1 + z_2 + \dots + z_m}{1 + z_1 + \dots + z_m}$, $ n = k = 1, u = (1,0,\dots,0).$
	Then the directional derivative is
	$
	\partial_u f = \frac{\partial f}{\partial z_1} = \frac{1}{(1 + z_1 + \dots + z_m)^2}.$
	Clearly, \(f\) and \(\partial_u f\) share the value \(1\) CM, but $
	f \not\equiv \partial_u f.$
	\end{exm}
\begin{exm}
	Let 
$$	f(z) = \frac{e^{z_1 + \dots + z_m} - 1}{z_1 + \dots + z_m}, \quad n = k = 1, \quad u = (1,0,\dots,0).
	$$
	Then the directional derivative is
	$
	\partial_u f(z) = \frac{\partial f}{\partial z_1} = \frac{(z_1 + \dots + z_m - 1)e^{z_1 + \dots + z_m} + 1}{(z_1 + \dots + z_m)^2}.
	$
	
	We note that
	\[
	f(z) - 1 = \frac{e^{z_1 + \dots + z_m} - 1 - (z_1 + \dots + z_m)}{z_1 + \dots + z_m},
	\]
	\[
	\partial_u f(z) - 1 = \frac{(z_1 + \dots + z_m - 1)e^{z_1 + \dots + z_m} + 1 - (z_1 + \dots + z_m)^2}{(z_1 + \dots + z_m)^2}.
	\]
	
	Clearly, $f$ is non-constant and meromorphic in $\mathbb{C}^m$ such that $f$ and $\partial_u f(z) - 1 $ share CM but $f \not\equiv \partial_u f$.
\end{exm}

The condition ``$\ol N_{2)}(r,0;f)=o(T(r,f))$'' in Corollary \ref{c2.1} is 
necessary as shown below.

\begin{exm} Let 
\[f(z)=1+\tan (z_1+\ldots+z_m)\]
and $u=(u_1,\ldots,u_m)$ such that $u_1+\ldots+u_m=1$. Note that $\partial_u(f)-1=(f(z)-1)^2$
and so $f$ and $\partial_u(f)$ share $1$ IM. Since $1 \pm i$ are the Picard exceptional values of $f$, we have 
$\ol N_{2)}(r,0;f)\not=o(T(r, f))$. Clearly $f \not\equiv \partial_u(f)$. 
\end{exm}

\subsection {{\bf Auxiliary lemmas}}
In the proof of Theorems \ref{t2.1}-\ref{t2.5}, we use the following key lemmas.
First we recall the lemma of logarithmic derivative:
\begin{lem}\cite[Lemma 1.37]{HLY}\label{l2} Let $f$ be a non-constant meromorphic function in $\mathbb{C}^m$ and   $I=(\alpha_1,\ldots,\alpha_m)\in \mathbb{Z}^m_+$ be a multi-index. Then for any $\varepsilon>0$, we have
\[m\Big(r,\frac{\partial^I(f)}{f}\Big)\leq |I|\log^+T(r,f)+|I|(1+\varepsilon)\log^+\log T(r,f)+O(1)\]
for all large $r$ outside a set $E$ with $\int_E d\log r<\infty$.
\end{lem}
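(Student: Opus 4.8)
The plan is to reduce this several–variables estimate to the classical one–variable lemma of the logarithmic derivative, first shrinking the multi-index $I$ to the case $|I|=1$ and then treating first-order partials.

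\emph{Reduction to $|I|=1$.} If $\partial^I(f)\equiv 0$ there is nothing to prove, so assume $\partial^I(f)\not\equiv 0$. Writing $\partial^I=\partial_{z_{j_{|I|}}}\circ\cdots\circ\partial_{z_{j_1}}$ and setting $g_0=f$, $g_l=\partial_{z_{j_l}}(g_{l-1})$ (so that each $g_l\not\equiv0$), one telescopes
\[
\frac{\partial^I(f)}{f}=\prod_{l=1}^{|I|}\frac{\partial_{z_{j_l}}(g_{l-1})}{g_{l-1}},
\qquad\text{hence}\qquad
m\!\left(r,\frac{\partial^I(f)}{f}\right)\le\sum_{l=1}^{|I|} m\!\left(r,\frac{\partial_{z_{j_l}}(g_{l-1})}{g_{l-1}}\right).
\]
A preliminary weak form of the lemma (obtained by the argument below with crude constants) shows that passing to a partial derivative changes the characteristic by at most a bounded factor plus a small error, so $\log^+T(r,g_{l-1})=\log^+T(r,f)+O(1)$ and $\log^+\log T(r,g_{l-1})=\log^+\log T(r,f)+O(1)$ off a set of finite logarithmic measure; feeding the sharp first-order bound into the sum then produces exactly the coefficient $|I|$.

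\emph{The first-order estimate.} For meromorphic $h\not\equiv0$ on $\mathbb{C}^m$ one wants
\[
m\!\left(r,\frac{\partial_{z_j}(h)}{h}\right)\le\log^+T(r,h)+(1+\varepsilon)\log^+\log T(r,h)+O(1)
\]
off a set of finite logarithmic measure. I would argue by restriction to complex lines: for $a\in S^{2m-1}$ put $h_a(\zeta)=h(\zeta a)$, so that $h_a'(\zeta)=(\partial_a h)(\zeta a)$, and Stoll's integral–geometric identity gives $\int_{S^{2m-1}}T(r,h_a)\,d\Omega(a)=T(r,h)+O(1)$, where $\Omega$ is the normalized invariant measure on the sphere. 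Applying the one–variable lemma of the logarithmic derivative to each $h_a$, integrating in $a$, using the concavity of $\log^+$ (Jensen), and comparing the coordinate partial with the gradient norm via $|\partial_{z_j}h|\le\|(\partial_{z_1}h,\dots,\partial_{z_m}h)\|$ together with the averaging identity $\int_{S^{2m-1}}|\partial_a h|^2\,d\Omega(a)=\tfrac1m\|(\partial_{z_1}h,\dots,\partial_{z_m}h)\|^2$ then yields the bound; alternatively, one proves the first-order estimate directly from the $\mathbb{C}^m$ Jensen formula $\int_{\mathbb{C}^m\langle r\rangle}\log|h|\,\sigma_m=N(r,0;h)-N(r,h)+O(1)$ and its derivative in $r$, as in Vitter and in Biancofiore–Stoll. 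The exceptional set $E$ and the factor $1+\varepsilon$ in front of $\log^+\log T$ come from a Borel-type growth lemma applied to (a continuous majorant of) $T(r,h)$ regarded as a function of $\log r$; the union of the finitely many exceptional sets produced in the reduction step is still of finite logarithmic measure.

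\emph{Main obstacle.} The crux is the first-order estimate in several variables \emph{with the sharp constants}: obtaining $\log^+T(r,h)$ rather than $2\log^+T(r,h)$ forces one to exploit genuinely one–dimensional information — either Stoll's \emph{exact} identity for characteristics on lines, or a careful use of the $\mathbb{C}^m$ Jensen formula — and a mild subtlety is that a coordinate partial is not literally the line-derivative, so that comparison must be routed through the gradient and the averaging identity above. One must also keep the exceptional set of finite logarithmic measure uniformly through the reduction. Since the statement is classical and is recorded as \cite[Lemma 1.37]{HLY}, in the paper one simply invokes that reference; the outline above is the route I would take to reconstruct it.
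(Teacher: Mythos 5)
The paper gives no proof of this statement: it is quoted verbatim from the cited reference (Hu--Li--Yang, Lemma 1.37), so there is no internal argument to compare yours against, and you correctly note at the end that in the paper one simply invokes that reference. Your reduction to the first-order case is sound: the telescoping factorization, the subadditivity of the proximity function over the product, and the bootstrap $T(r,\partial_{z_j}h)\le 2T(r,h)+m\bigl(r,\partial_{z_j}h/h\bigr)+O(1)$, which yields $\log^+T(r,g_{l-1})=\log^+T(r,f)+O(1)$ off a finite union of exceptional sets, are all standard and do produce the coefficient $|I|$.

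The gap is in your primary route to the first-order estimate, the slicing argument, and it is threefold. First, the slice $h_a(\zeta)=h(\zeta a)$ only gives $h_a'(\zeta)=(\partial_a h)(\zeta a)$, i.e.\ the \emph{radial} directional derivative at points of $\mathbb{C}^m\langle r\rangle$: after integrating the one-variable estimates over $a\in S^{2m-1}$ you control $\int_{\mathbb{C}^m\langle r\rangle}\log^+\bigl|(\partial_{z/\|z\|}h)/h\bigr|\,\sigma_m$ and nothing more, whereas the averaging identity $\int_{S^{2m-1}}|\partial_a h(z)|^2\,d\Omega(a)=\tfrac1m\|\nabla h(z)\|^2$ needs $a$ to range over \emph{all} directions at each fixed $z$; the two uses of $a$ are incompatible, so the comparison with $|\partial_{z_j}h|$ cannot be routed this way. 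Second, ``concavity of $\log^+$'' goes the wrong way for your purpose: Jensen gives $\int\log g\,d\Omega\le\log\int g\,d\Omega$, i.e.\ it bounds the average of the logarithms by the logarithm of the average, whereas you need to bound $\log^+$ of a spherical average of $|\partial_a h|^2/|h|^2$ from above by the average of $\log^+$; what actually saves this step in correct treatments is the exact rotation-invariant identity $\int_{S^{2m-1}}\log|\langle a,u\rangle|\,d\Omega(a)=\mathrm{const}$ for unit vectors $u$, not concavity. Third, the one-variable lemma applied to each $h_a$ produces an exceptional set $E_a$ depending on $a$, and the union over uncountably many $a$ need not have finite logarithmic measure; one must instead use an everywhere-valid Gol'dberg--Grinshtein-type bound in terms of $T(R,h_a)$ for $R>r$, integrate that in $a$, and only afterwards apply the Borel lemma to $T(r,f)$. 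Your parenthetical alternative --- a direct argument from the Green--Jensen formula on balls in $\mathbb{C}^m$, as in Vitter and Biancofiore--Stoll --- is the route that actually circumvents all three difficulties and is essentially how the cited reference proceeds; as written, your primary argument does not close.
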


The following result is known as second main theorem:
\begin{lem}\cite[Lemma 1.2]{HY1}\label{l1} Let $f$ be a non-constant meromorphic function in $\mathbb{C}^m$ and let $a_1,a_2,\ldots,a_q$ be different points in $\mathbb{P}^1$. Then
\beas \parallel (q-2)T(r,f)\leq \sideset{}{_{j=1}^{q}}{\sum} \ol N(r,a_j;f)+O(\log (rT(r,f))).\eeas
\end{lem}

\begin{lem}\label{l2a} \cite[Theorem 1.26]{HLY} Let $f$ be a non-constant meromorphic function in $\mathbb{C}^m$. Assume that 
$R(z, w)=\frac{A(z, w)}{B(z, w)}$. Then
\beas T\left(r, R_f\right)=\max \{p, q\} T(r, f)+O\Big(\sideset{}{_{j=0}^{p}}{\sum}T(r, a_j)+\sideset{}{_{j=0}^{q}}{\sum}T(r, b_j)\Big),\eeas
where $R_f(z)=R(z, f(z))$ and two coprime polynomials $A(z, w)$ and $B(z,w)$ are given
respectively $A(z,w)=\sum_{j=0}^p a_j(z)w^j$ and $B(z,w)=\sum_{j=0}^q b_j(z)w^j$.
\end{lem}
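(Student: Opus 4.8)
The plan is to establish separately the two bounds $T(r,R_f)\le d\,T(r,f)+O(S)$ and $T(r,R_f)\ge d\,T(r,f)-O(S)$, where $d=\max\{p,q\}$ and $S:=\sum_{j=0}^{p}T(r,a_j)+\sum_{j=0}^{q}T(r,b_j)$; this is the classical Valiron--Mokhon'ko argument, which transfers to $\mathbb{C}^m$ because every ingredient it uses is already available here: subadditivity of $m$, $N$ and $T$, the identities $T(r,g^n)=n\,T(r,g)+O(1)$ and $T(r,1/g)=T(r,g)+O(1)$, the First Main Theorem $m(r,c;g)+N(r,c;g)=T(r,g)+O(1)$ for constant $c$, and the fact that zeros and poles of a meromorphic function on $\mathbb{C}^m$ are controlled generically off an analytic set of smaller dimension (cf. \cite{HLY}, \cite{HY1}). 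One may assume $a_pb_q\not\equiv0$ and that $A$ and $B$ are coprime in $w$ over the field of meromorphic functions on $\mathbb{C}^m$ (otherwise $R$ is not in lowest terms and one replaces $A,B$ and $p,q$ accordingly), and, replacing $R_f$ by $1/R_f$ if necessary (harmless since $T(r,1/R_f)=T(r,R_f)+O(1)$), that $p\ge q$, so $d=p$.

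For the upper bound I would divide $A$ by $B$ in $w$, writing $R_f=P(z,f)+C(z,f)/B(z,f)$ with $\deg_wP=p-q$, $\deg_wC<q$, and coefficients of $P,C$ rational in the $a_j,b_j$, hence small. Since $\gcd(C,B)=\gcd(A,B)=1$, the proper part is handled by induction on $q$: divide $B$ by $C$, note $\gcd(C,\,B\bmod C)=\gcd(C,B)=1$, and iterate down the degree to get $T\big(r,C(z,f)/B(z,f)\big)\le q\,T(r,f)+O(S)$. Combined with $T(r,P(z,f))\le(p-q)T(r,f)+O(S)$ -- from the pointwise estimate $|P(z,w)|\le\big(\sum|c_i|\big)\max(1,|w|)^{p-q}$ for $m$ and the analogous estimate on pole orders for $N$ -- this yields $T(r,R_f)\le p\,T(r,f)+O(S)$.

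For the lower bound I would analyse poles and proximity of $R_f$ directly. Generically (off an analytic set carrying only an $O(S)$ contribution, on which $\operatorname{Res}_w(A,B)$ and the leading coefficients do not vanish) the poles of $R_f=A_f/B_f$ occur precisely at the poles of $f$ -- with multiplicity $p-q$ times that of $f$, since the $p$-fold pole of $A_f$ dominates the $q$-fold pole of $B_f$ -- and at the zeros of $B_f$, where $A_f\not\equiv0$ by coprimality, again with the same multiplicity; hence $N(r,R_f)=(p-q)N(r,f)+N(r,0;B_f)+O(S)$. Dually, $\log^+|R_f|$ is large only where $|f|$ is large (contributing $(p-q)m(r,f)$, since there $|R_f|\asymp(|a_p|/|b_q|)|f|^{p-q}$) or where $|B_f|$ is small, i.e. $|1/B_f|$ is large (contributing $m(r,1/B_f)$, because $|A_f|$ stays bounded away from $0$ there), and these regions are essentially disjoint, so $m(r,R_f)\ge(p-q)m(r,f)+m(r,1/B_f)-O(S)$. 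Adding the two estimates and using $m(r,1/B_f)+N(r,0;B_f)=T(r,B_f)+O(1)=q\,T(r,f)+O(S)$ (First Main Theorem for $B_f$ at $0$, together with the special case $T(r,B_f)=q\,T(r,f)+O(S)$ of the lemma, established as above) gives $T(r,R_f)\ge p\,T(r,f)-O(S)$, and the two bounds combine to the assertion.

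I expect the proximity half of the lower bound -- the inequality $m(r,R_f)\ge(p-q)m(r,f)+m(r,1/B_f)-O(S)$ -- to be the main obstacle. Its proof requires partitioning $\mathbb{C}^m\langle r\rangle$ into the set where $|f|\ge M$, the set where $|B(z,f)|\le\varepsilon$, and a transition set, bounding $\log^+|R_f|$ from below on the first two and showing the transition set (and the exceptional analytic set where the resultant or a leading coefficient degenerates) contributes only $O(S)$; the decisive point is that coprimality of $A$ and $B$ forces $|A(z,f)|$ to remain bounded below wherever $|B(z,f)|$ is small, which is what makes $|R_f|$ genuinely large on the second set rather than indeterminate. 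Alternatively, one can bypass these estimates by restricting $f$ to generic complex lines through $0\in\mathbb{C}^m$ -- along which the characteristic function behaves well -- and invoking the one-variable Valiron--Mokhon'ko theorem; but the direct argument above is self-contained and is the one I would write out.
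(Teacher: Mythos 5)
The paper does not prove this lemma: it is quoted verbatim as Theorem 1.26 of the Hu--Li--Yang monograph \cite{HLY}, so there is no in-paper proof to compare against. Your sketch is the standard Valiron--Mokhon'ko argument (polynomial case plus Euclidean division in $w$ for the upper bound; resultant/coprimality control of $|A(z,f)|$ on the set where $|B(z,f)|$ is small for the lower bound), which is essentially how the cited source establishes the result in $\mathbb{C}^m$, and I see no gap in it.
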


\begin{lem}\label{l3} \cite[Lemma 2.1]{HY2} Let $f$ be a non-constant meromorphic function in $\mathbb{C}^m$. Take a positive integer $n$ and take polynomials of $f$ and its partial derivatives:
\beas\label{cl1} P(f)=\sideset{}{_{\mathbf{p} \in I}}{\sum} a_{\mathbf{p}} f^{p_0}\big(\partial^{\mathbf{i}_1} f\big)^{p_1} \cdots\big(\partial^{\mathbf{i}_l} f\big)^{p_l}, \quad \mathbf{p}=\left(p_0, \ldots, p_l\right) \in \mathbb{Z}_{+}^{l+1},\eeas
\beas\label{cl2} Q(f)=\sideset{}{_{\mathbf{q} \in J}}{\sum} c_{\mathbf{q}} f^{q_0}\big(\partial^{\mathbf{j}_1} f\big)^{q_1} \cdots\big(\partial^{\mathbf{j}_s} f\big)^{q_s}, \quad \mathbf{q}=\left(q_0, \ldots, q_s\right) \in \mathbb{Z}_{+}^{s+1}\eeas
and $B(f)=\sideset{}{_{k=0}^{n}}{\sum} b_k f^k,$
where $I, J$ are finite sets of distinct elements and $a_{\mathbf{p}}, c_{\mathbf{q}}, b_k$ are meromorphic functions on $\mathbb{C}^m$ such that $\parallel T(r,a_{\mathbf{p}})=o(T(r,f))$, $\parallel T(r,c_{\mathbf{q}})=o(T(r,f))$, $\parallel T(r,b_k)=o(T(r,f))$ and $b_n \not \equiv 0$. Assume that $f$ satisfies the equation $B(f) P(f)=Q(f)$.
If $\deg(Q(f)) \leq n=\deg(B(f))$, then $\parallel\; m(r, P(f))=o(T(r, f))$.
\end{lem}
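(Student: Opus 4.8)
This is a Clunie-type estimate, so the plan is to bound the proximity function $m(r,P(f))=\int_{\mathbb{C}^m\langle r\rangle}\log^+|P(f)|\,\sigma_m$ directly, by partitioning the sphere $\mathbb{C}^m\langle r\rangle$ into two pointwise-defined pieces according to the size of $|f|$ relative to the (small) coefficients of $B$. First I would normalise the leading coefficient away: since $b_n\not\equiv0$, write $B(f)=b_n\widetilde B(f)$ with $\widetilde B(f)=f^n+\sum_{k=0}^{n-1}(b_k/b_n)f^k$ monic, so that $\widetilde B(f)\big(b_nP(f)\big)=Q(f)$; replacing $P$ by $b_nP$ and each $a_{\mathbf p}$ by $b_na_{\mathbf p}$ keeps all coefficients small with respect to $f$ and changes $m(r,P(f))$ by at most $m(r,b_n)+m(r,1/b_n)=o(T(r,f))$, so I may assume $B$ is monic. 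Throughout I would use the two devices underlying all Clunie-type arguments: every monomial $f^{p_0}(\partial^{\mathbf i_1}f)^{p_1}\cdots(\partial^{\mathbf i_l}f)^{p_l}$ equals $f^{p_0+p_1+\cdots+p_l}\prod_t\big(\partial^{\mathbf i_t}f/f\big)^{p_t}$, and by Lemma~\ref{l2} one has $m(r,\partial^{I}f/f)=o(T(r,f))$ for every multi-index $I$, outside an exceptional set of finite logarithmic measure.

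Setting $\beta(z)=\sum_{k=0}^{n-1}|b_k(z)|$, I would split the sphere as
\[
E_1(r)=\{z\in\mathbb{C}^m\langle r\rangle:\ |f(z)|\le\max(1,2\beta(z))\},\qquad E_2(r)=\mathbb{C}^m\langle r\rangle\setminus E_1(r).
\]
On $E_2(r)$ one has $|f|>1$ and $|f|>2\beta$, hence $|B(f)|\ge|f|^n\big(1-\beta/|f|\big)\ge\tfrac12|f|^n$; then $B(f)P(f)=Q(f)$ gives $|P(f)|\le2|Q(f)|/|f|^n$, and writing $Q(f)/f^n=\sum_{\mathbf q}c_{\mathbf q}\,f^{q_0+\cdots+q_s-n}\prod_t\big(\partial^{\mathbf j_t}f/f\big)^{q_t}$ and using the hypothesis $\deg Q\le n$ (so each exponent $q_0+\cdots+q_s-n\le0$, whence $|f|^{q_0+\cdots-n}\le1$ on $E_2(r)$) yields the pointwise bound $\log^+|P(f)|\le\sum_{\mathbf q}\log^+|c_{\mathbf q}|+\sum_{\mathbf q,t}q_t\log^+|\partial^{\mathbf j_t}f/f|+O(1)$. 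On $E_1(r)$ I would instead feed the monomial expansion of $P(f)$ itself into the estimate: there $|f|^{p_0+\cdots+p_l}\le\max(1,2\beta)^{\deg P}$, so $\log^+|P(f)|\le(\deg P)\log^+\beta+\sum_{\mathbf p}\log^+|a_{\mathbf p}|+\sum_{\mathbf p,t}p_t\log^+|\partial^{\mathbf i_t}f/f|+O(1)$. Integrating each inequality against $\sigma_m$ over the corresponding set and invoking Lemma~\ref{l2} together with $m(r,c_{\mathbf q})=m(r,a_{\mathbf p})=m(r,b_k)=m(r,\beta)=o(T(r,f))$, I would obtain $\int_{E_i(r)}\log^+|P(f)|\,\sigma_m=o(T(r,f))$ for $i=1,2$; adding the two pieces gives $\parallel\; m(r,P(f))=o(T(r,f))$.

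The point that needs care---and the reason for taking a coefficient-dependent threshold---is the behaviour of $P(f)=Q(f)/B(f)$ near the zeros of $B(f)$, where that identity is worthless because the denominator is small. Defining $E_2(r)$ through $|f(z)|\ge2\beta(z)$ rather than through a fixed constant forces $|B(f)|\ge\tfrac12|f|^n$ pointwise on $E_2(r)$, so $E_2(r)$ avoids the zeros of $B(f)$ entirely; the only cost is that $E_1(r)$ then also contains the region where $|f|$ is moderately large, but there $\log^+|f|\le\log^+\!\max(1,2\beta)$ is dominated by $\log^+\beta$, whose $\sigma_m$-integral is $o(T(r,f))$, so nothing is lost. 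Beyond this, the passage from the classical one-variable Clunie lemma to $\mathbb{C}^m$ is purely formal: all the inequalities used ($\log^+|xy|\le\log^+|x|+\log^+|y|$, $\log^+\sum_j|x_j|\le\sum_j\log^+|x_j|+O(1)$, and $|w^n+\sum_{k<n}b_kw^k|\ge|w|^n-\sum_{k<n}|b_k||w|^k$) hold pointwise on the sphere $\mathbb{C}^m\langle r\rangle$ and are only then integrated against the positive measure $\sigma_m$, so the several-variables statement costs nothing once Lemma~\ref{l2} is available.
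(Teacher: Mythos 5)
Your proposal is correct; note, though, that the paper does not prove this lemma at all---it is quoted verbatim from Hu and Yang \cite[Lemma 2.1]{HY2}---so there is no in-paper argument to compare against. Your proof is essentially the standard Clunie--Doeringer argument used in that reference: normalise $B$ to be monic, split the sphere $\mathbb{C}^m\langle r\rangle$ according to whether $|f|$ exceeds a coefficient-dependent threshold so that $|B(f)|\geq \tfrac12|f|^n$ on the large-$|f|$ piece, estimate $P(f)=Q(f)/B(f)$ there and $P(f)$ directly on the complementary piece, and conclude via the several-variables logarithmic derivative lemma (Lemma \ref{l2}); all steps check out.
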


\subsection {{\bf Proofs of the main results}} 
\begin{proof}[Proof of Theorem \ref{t2.1}] 
Let $F = f^{n}$. By the given conditions, $F$ and $\partial_u^k(F)$ share $1$ CM and $n\geq k+1$. Using Lemma \ref{l2}, we have  
$\parallel T(r,\partial_u^k(F))=O(T(r,f))$. Note that
\begin{align}\label{bm.1a} \partial_u^k(F)&=\partial_u^k(f^n)\\&=
\partial_u^{k-1}\big(u_1\partial_{z_1}(f^n)+u_2\partial_{z_2}(f^n)+\ldots +u_m\partial_{z_m}(f^n)\big)\nonumber\\
&= \partial_u^{k-1}\big(nf^{n-1}u_1\partial_{z_1}(f)+nf^{n-1}u_2\partial_{z_2}(f)+\ldots +nf^{n-1}u_m\partial_{z_m}(f)\big)\nonumber
\\& = \partial_u^{k-1}\big( nf^{n-1}\partial_u(f)\big) \nonumber
\\ &=  n\partial_u^{k-2}\big( (n-1)f^{n-2}(\partial_u(f))^{2} + f^{n-1}\partial_u^2(f)\big) \nonumber 
\\ &= n(n-1)\partial_u^{k-3}\big( (n-2)f^{n-3}(\partial_u(f))^{3}\big)+ n(n-1)\partial_u^{k-3}\big( 2f^{n-2}\partial_u(f)\partial_u^2(f)\big) 
\nonumber \\ & 
+n\partial_u^{k-3}\big( (n-1)f^{n-2}\partial_u(f)\partial_u^2(f)\big) + n\partial_u^{k-3}\big(f^{n-1}\partial_u^3(f)\big) 
\nonumber \\ & = \cdots \cdots \cdots \quad\cdots\cdots \quad\cdots\cdots\nonumber \\ & =
\frac{n!}{(n-k)!}f^{n-k}(\partial_u(f))^{k}+\frac{k(k-1)}{2}\frac{n!}{(n-k+1)!}f^{n-k+1}(\partial_u(f))^{k-2}\partial_u^2(f)\nonumber \\ &+\cdots+nf^{n-1}\partial_u^k(f)\nonumber\\&=
\frac{n!}{(n-k)!}f^{n-k}(\partial_u(f))^{k}+\frac{k(k-1)}{2}\frac{n!}{(n-k+1)!}f^{n-k+1}(\partial_u(f))^{k-2}\partial_u^2(f)+R_1(f),\nonumber
\end{align}
where $R_{1}(f)$ is a differential polynomial in $f$ and each term of $R_{1}(f)$ contains $f^{l} (3 \leq l \leq n-1$) as a factor.
Let
\begin{align}\label{bm.1} \varphi = \frac{\partial_u(F)(F - \partial_u^k(F))}{F(F - 1)}.\end{align}

Now we consider the following two cases.\par

\medskip
{\bf Case 1.} Let $\varphi \not\equiv 0$. Then $F\not\equiv \partial_u^k(F)$. Also from (\ref{bm.1}), we get 
\begin{align*}\label{bm.2} \varphi = \frac{\partial_u(F)}{F - 1}\Big(1 - \frac{\partial_u^k(F)}{F}\Big)
\end{align*}
and so applying Lemma \ref{l2} get $\parallel m(r,\varphi)=o(T(r,F))$. Since $\parallel T(r,F)=nT(r,f)+O(1)$, it follows that $\parallel m(r,\varphi)=o(T(r,f))$. 

Take $z_0\in \mathbb{C}^m$. Suppose $\mu_F^1(z_0)>0$. Since $\operatorname{supp} \mu_F^1=\operatorname{supp}\mu_{\partial_u^k(F)}^1$, we have $\mu_{\partial_u^k(F)}^1(z_0)>0$. By the given condition, we get $\mu^1_F(z_0)=\mu^1_{\partial_u^k(F)}(z_0)$. Then from (\ref{bm.1}), we have generically $\mu_{\varphi}^0(z_0)\geq 0$
over $\text{supp}\;\mu_F^1$. Hence $\mu_{\varphi}^{\infty}(z)=0$ generically for all $z\in\mathbb{C}^m$ and so $N(r,\varphi)=0$. Since $\parallel m(r,\varphi)=o(T(r,f))$, we have $\parallel T(r,\varphi)=o(T(r,f))$.
Using the first main theorem, we get $\parallel T\big(r,\frac{1}{\varphi}\big)=o(T(r,f))$.
Now from (\ref{bm.1}), we get 
\bea\label{bm.4}\frac{1}{F} = \frac{1}{\varphi}\Big(\frac{\partial_u(F)}{F-1} - \frac{\partial_u(F)}{F}\Big)\Big(1-\frac{\partial_u^k(F)}{F}\Big).
\eea

Applying Lemma \ref{l2} to (\ref{bm.4}) we get $\parallel m(r,0;F)=o(T(r,f))$ and so $\parallel m(r,0;f)=o(T(r,f))$.

Now we divide following two sub-cases.\par

\medskip
{\bf Sub-case 1.1} Let $n\geq k+2$. Take $z_0\in \mathbb{C}^m$. Suppose $\mu_f^0(z_0)>0$. Obviously $\mu_F^0(z_0)=n\mu^0_f(z_0)$ over $\text{supp}\;\mu_f^0$. Also from (\ref{bm.1a}), we get 
$\mu_{\partial_u^k(F)}^0(z_0)\geq (n-k)\mu_f^0(z_0)$ over $\text{supp}\;\mu_f^0$. Then from (\ref{bm.1}), we have generically
\bea\label{bm.3b} \mu_{\varphi}^0(z_0)\geq (n-k-1)\mu_f^0(z_0)\eea
over $\text{supp}\;\mu_f^0$. 
Then (\ref{bm.3b}) gives $\mu_{\varphi}^0(z_0)\geq \mu_f^0(z_0)$ over $\text{supp}\;\mu_f^0$. Consequently $N(r,0;f)\leq N(r,0;\varphi)$ and so first main theorem yields 
\begin{align*} \parallel N(r,0;f)\leq T(r,\varphi)+O(1)=o(T(r,f)),
\end{align*}
i.e., $\parallel N(r,0;f)=o(T(r,f))$. Since $\parallel m(r,0;f)=o(T(r,f))$ we get $\parallel T\big(r,\frac{1}{f}\big)=o(T(r,f))$, i.e., $\parallel T(r,f)=o(T(r,f))$, which is impossible.\par

\medskip
{\bf Sub-case 1.2.} Let $n=k+1$. Now from (\ref{bm.1a}), we have 
\bea\label{bm.5} \partial_u^k(F)=\partial_u^{k}(f^{k+1})&=& 
(k+1)!f(\partial_u(f))^{k} + \frac{k(k-1)}{4}(k+1)!f^{2}(\partial_u(f))^{k-2}\partial_u^2(f)\\ &&
+\ldots+(k+1)f^k\partial_u^k(f)\nonumber\\ &=&
(k+1)!f(\partial_u(f))^{k} + \frac{k(k-1)}{4}(k+1)!f^{2}(\partial_u(f))^{k-2}\partial_u^2(f)+R_1(f),\nonumber
\eea
where $R_{1}(f)$ is a differential polynomial in $f$ and each term of $R_{1}(f)$ contains $f^{l} (3 \leq l \leq k$) as a factor.

We assume that $\mu^0_{f(2}(z_0)> 0$ for $z_0\in \mathbb{C}^m$. Obviously $\mu_F^0(z_0)=(k+1)\mu^0_{f(2}$ over $\text{supp}\;\mu_f^0$. Also from (\ref{bm.5}), we get 
\[\mu_{\partial_u^k(F)}^0(z_0)\geq (k+1)\mu^0_{f(2}-k\]
over $\text{supp}\;\mu_f^0$. Similarly
\[\mu_{\partial_u(F)}^0(z_0)\geq (k+1)\mu^0_{f(2}-1\]
over $\text{supp}\;\mu_f^0$. Then from (\ref{bm.1}), we have generically
\bea\label{bm.3} \mu_{\varphi}^0(z_0)\geq (k+1)\big(\mu_{f(2}^0(z_0)-1\big)\eea
over $\text{supp}\;\mu_f^0$. Consequently $N_{(2}(r,0;f)\leq N(r,0;\varphi)$ and so by the first main theorem, we get 
\[\parallel N_{(2}(r,0;f)\leq T(r,\varphi)+O(1)=o(T(r,f)),\]
i.e., 
\bea\label{bm.5a}\parallel\;N_{(2}(r,0;f)=o(T(r,f)).\eea

Since $\parallel m(r,0;f)=o(T(r,f))$, we get 
\bea\label{bm.6}\parallel\;T(r,f)=N_{1)}(r,0;f)+o(T(r,f)).\eea

From (\ref{bm.6}) it is clear that $\parallel N_{1)}(r,0;f)\neq o(T(r,f))$. Again from (\ref{bm.5}), we get
\bea\label{bm.7} \partial_u^{k+1}(F)=
(k + 1)!(\partial_u(f))^{k + 1} + \frac{k(k + 1)}{2}(k + 1)!f(\partial_u(f))^{k - 1}\partial_u^2(f) +R_2(f),\eea  
where $R_{2}(f)$ is a differential polynomial in $f$.
Since $F$ and $\partial_u^k(F)$ share $1$ CM, then there exists an entire function $\alpha$ in $\mathbb{C}^m$ such that
\bea\label{bm.8} \partial_u^k(F)-1= {e}^{\alpha} (F-1).\eea

\smallskip
Let $\alpha$ be a constant and $e^{\alpha}=d$. Then from (\ref{bm.8}), we get 
\[\partial_u^k(F)-1=d(F-1).\]

Let us take
$\mu^0_{f)1}(z_0)> 0$ for $z_0\in \mathbb{C}^m$. Obviously $\mu^0_F(z_0)>0$ and $\mu^0_{\partial_u^k(F)}(z_0)>0$ over $\text{supp}\;\mu_f^0$. Then it is easy to deduce that $d=1$ and so $F\equiv \partial_u^k(F)$, which is impossible.

\smallskip
Let $e^{\alpha}$ be non-constant. Clearly (\ref{bm.8}) gives
\[\parallel T(r,e^{\alpha})\leq T(r,\partial_u^k(F))+T(r,F)+O(1)\leq 2nT(r,f)+o(T(r,f))\]
and so $\parallel T(r,e^{\alpha})=O(T(r,f))$. Now using Lemma \ref{l2}, we get
\[\parallel\;T(r,\partial_u(\alpha))=m(r,\partial_u(\alpha))=m\Big(r,\frac{\partial_u(e^{\alpha})}{e^{\alpha}}\Big)=o(T(r,e^{\alpha}))\]
and so $\parallel T(r,\partial_u(\alpha))=o(T(r,f))$. 
Again from (\ref{bm.8}), we have
\bea\label{bm.9}\partial_u^{k+1}(F)=e^\alpha (F-1)\partial_u(\alpha)+e^{\alpha} \partial_u(F).\eea

Combining (\ref{bm.8}) with (\ref{bm.9}), we get
\[\frac{\partial_u^{k+1}(F)}{\partial_u^{k}(F)-1}=\partial_u(\alpha)+\frac{\partial_u(F)}{F-1},\]
i.e., 
\[\partial_u^{k+1}(F) F-\partial_u(\alpha) \partial_u^k(F) F-\partial_u^k(F) \partial_u(F)=\partial_u^{k+1}(F)-\partial_u(\alpha)\big(\partial_u^k(F)+F\big)-\partial_u(F)+\partial_u(\alpha)\]
and so
\bea\label{bm.10} f^{k+1}P(f)=Q(f),\eea
where
\bea\label{bm.11} P(f)=\partial_u^{k+1}(F)-\partial_u(\alpha) \partial_u^k(F)-\partial_u^k(F)\frac{\partial_u(F)}{F}\eea
and 
\[Q(f)=\partial_u^{k+1}(F)-\partial_u(\alpha)\big(\partial_u^k(F)+F\big)-\partial_u(F)+\partial_u(\alpha).\]

Now using (\ref{bm.5}) and (\ref{bm.7}) to (\ref{bm.11}), we get 
\bea\label{bm.13} &&P(f)\\&=&\partial_u^{k+1}(F)-\partial_u(\alpha) \partial_u^k(F)-\partial_u^k(F)\frac{\partial_u(F)}{F}\nonumber\\&=&
(k + 1)!(\partial_u(f))^{k + 1} + \frac{k(k + 1)}{2}(k + 1)!f(\partial_u(f))^{k - 1}\partial_u^2(f) +R_2(f)\nonumber\\&&
-\partial_u(\alpha)\left\lbrace (k+1)!f(\partial_u(f))^{k} + \frac{k(k-1)}{4}(k+1)!f^{2}(\partial_u(f))^{k-2}\partial_u^2(f)+R_1(f) \right\rbrace\nonumber\\&&-(k+1)\frac{\partial_u(f)}{f}\left\lbrace (k+1)!f(\partial_u(f))^{k} + \frac{k(k-1)}{4}(k+1)!f^{2}(\partial_u(f))^{k-2}\partial_u^2(f)+R_1(f) \right\rbrace\nonumber\\&=&
-k(k + 1)!(\partial_u(f))^{k + 1}-\frac{k(k+1)(k-3)}{4}(k + 1)!f(\partial_u(f))^{k - 1}\partial_u^2(f)\nonumber\\&&-(k+1)!f(\partial_u(f))^{k}\partial_u(\alpha)+R_3(f),\nonumber
\eea
where $R_{3}(f)$ is a differential polynomial in $f$ such that each term of $R_3(f)$ contains $f^l$
for some $l\;(1\leq l\leq k)$ as a factor. Again using (\ref{bm.5}) and (\ref{bm.7}), we get 
\beas Q(f)=\sideset{}{_{\mathbf{p}\in I}}{\sum} S_{\mathbf{p}}(\partial_u(\alpha)) f^{p_0}\left(\partial_u(f)\right)^{p_1} \cdots\big(\partial_u^{k+1}(f)\big)^{p_{k+1}},\eeas
where $\mathbf{p}=(p_0,\ldots,p_{k+1})\in\mathbb{Z}_+^{k+2}$ such that $\sum_{j=0}^{k+1}p_j\leq k+1$, $I$ is finite set of distinct element
and $S_{\mathbf{p}}\big(\partial_u(\alpha)\big)$ is a polynomial in $\partial_u(\alpha)$ with constant coefficients.

Now we consider following two sub-cases.\par

\medskip
{\bf Sub-case 1.2.1.} Let $P(f)\not\equiv 0$. It is clear from (\ref{bm.13}) that every monomial of $P(f)$ has the form
\[\hat{S}_{\mathbf{q}}\big(\partial_u(\alpha)\big) f^{q_0}\big(\partial_u(f)\big)^{q_1} \cdots\big(\partial_u^{k+1}(f)\big)^{q_{k+1}},\]
where $\mathbf{q}=(q_0,\ldots,q_{k+1})\in\mathbb{Z}_+^{k+2}$ such that $\sum_{j=0}^{k+1}q_j=k+1$, $q_0 \leq k$
and $\hat{S}_{\mathbf{q}}\big(\partial_u(\alpha)\big)$ is a polynomial in $\partial_u(\alpha)$ with constant coefficients.
So using Lemma \ref{l2} to (\ref{bm.13}), we get
\bea\label{bm.15}  \parallel\;m\Big(r, \frac{P(f)}{f^k \partial_u(f)}\Big)=o(T(r,f)).\eea

On the other hand using Lemma \ref{l3} to (\ref{bm.10}), we get
\bea\label{bm.16}\parallel\;T(r,P(f))=m(r,P(f))=o(T(r, f)).\eea

Therefore (\ref{bm.15}) and (\ref{bm.16}) yield
\bea\label{bm.17} \parallel\;m\Big(r, \frac{1}{f^{k}\partial_u(f)}\Big)&\leq & m\Big(r, \frac{P(f)}{f^{k}\partial_u(f)}\Big)+m\Big(r, \frac{1}{P(f)}\Big) \nonumber \\
& \leq & m \Big(r, \frac{P}{f^k\partial_u(f)}\Big)+T(r,P(f))+O(1)=o(T(r, f)).\eea

Now using Lemma 2.2 and (\ref{bm.17}), we get
\beas \parallel\;m\Big(r, \frac{1}{F-1}\Big)\leq m\Big(r, \frac{\partial_u(F)}{F-1}\Big)+m\Big(r,\frac{1}{f^{k}\partial_u(f)}\Big)=o(T(r, f))\eeas
and so using Lemma \ref{l2} to (\ref{bm.8}), we get
\[\parallel\;m\big(r,e^{\alpha}\big)\leq m\Big(r,\frac{\partial_u^k(F)}{F-1}\Big)+m\Big(r,\frac{1}{F-1}\Big)+O(1)\leq o(T(r, f)),\]
which shows that 
\[\parallel T\big(r,e^{\alpha}\big)=o(T(r, f)).\]

Using (\ref{bm.5}) to (\ref{bm.8}), we see that
\bea\label{bm.19} e^{\alpha}-1&=&\frac{\partial_u^k(F)-F}{F-1}\\&=&\frac{(k+1)!f(\partial_u(f))^{k} + \frac{k(k-1)}{4}(k+1)!f^{2}(\partial_u(f))^{k-2}\partial_u^2(f)+R_1(f)-f^{k+1}}{f^{k+1}-1},\nonumber\eea
where $R_{1}(f)$ is a differential polynomial in $f$ and each term of $R_{1}(f)$ contains $f^{l} (3 \leq l \leq k$) as a factor.

We assume that $\mu^0_{f}(z_0)> 0$ for $z_0\in \mathbb{C}^m$. Then from (\ref{bm.19}), we have generically
$\mu^1_{e^{\alpha}}(z_0)>0$ over $\text{supp}\;\mu^1_{e^{\alpha}}$. So 
\[\parallel N(r,0;f)\leq N(r,1;e^{\alpha}).\]

Since $\parallel T\big(r,e^{\alpha}\big)=o(T(r, f))$, using first main theorem, we get
\[\parallel\;N_{1)}(r,0;f)\leq N(r,0;f)\leq N(r,1; e^{\alpha})\leq T\big(r,e^{\alpha}\big)+O(1)=o(T(r, f))).\]

Therefore from (\ref{bm.6}), we get $\parallel T(r,f)=o(T(r,f))$, which is impossible.\par

\medskip
{\bf Sub-case 1.2.2.} Let $P(f)\equiv 0$. Now from (\ref{bm.13}), we have 
\bea\label{bm.20} k(k + 1)!(\partial_u(f))^{k + 1}&\equiv& -\frac{k(k+1)(k-3)}{4}(k + 1)!f(\partial_u(f))^{k - 1}\partial_u^2(f)\\&&-(k+1)!f(\partial_u(f))^{k}\partial_u(\alpha)+R_3(f)\nonumber.\eea

Let  
\bea\label{bm.21} h=\frac{\partial_u(f)}{f}.\eea

Suppose $\mu^0_{f)1}(z_0)> 0$ for $z_0\in \mathbb{C}^m$. Then (\ref{bm.20}) yields $\mu^0_{\partial_u(f)}(z_0)>0$ over $\text{supp}\;\mu_{\partial_u(f)}^0$. Now from (\ref{bm.21}), we get generically
$\mu^0_{h}(z_0)\geq 0$ over $\text{supp}\;\mu^0_{h}$ and so  
\[N(r,h)\leq N_{(2}(r,0;f).\]

Using Lemma \ref{l2} and (\ref{bm.5a}) to (\ref{bm.21}), we get
\[\parallel\;T(r,h)=m\Big(r,\frac{\partial_u(f)}{f}\Big)+N\Big(r,\frac{\partial_u(f)}{f}\Big)\leq N_{(2}(r,0;f)+o(T(r,f))=o(T(r,f))\]
and so $h$ is a small function of $f$. Therefore $\partial_u(f)=hf$, where $h$ is a small function of $f$. Note that \[\partial_u^2(f)=\partial_u(h)f+h\partial_u(f)=(\partial_u(h)+h^2)f=h_2f,\]
where $h_2=\partial_u(h)+h^2$ is a small function of $f$. Therefore in general, we have
\bea\label{bm.22} \partial_u^i(f)=h_if,\eea
where $h_i$ is a small function of $f$ for $i=1,2,\ldots$ and $h_1=h$.

Note that 
\[\partial_u(F)=(k+1) f^k\partial_u(f)=(k+1)h_1f^{k+1}.\]

Also using (\ref{bm.22}) to (\ref{bm.5}), we can write 
$\partial^k_u(F)=af^{k+1}$, where $a$ is a small function of $f$. Since $F\not\equiv \partial^k_u(F)$, it follows that $a\not\equiv 1$.
Therefore 
\[\partial_u^k(F)-F=(a-1)F.\]

Consequently from (\ref{bm.8}), we get
\bea\label{bm.23} e^{\alpha}-1&=&\frac{\partial_u^k(F)-F}{F-1}=(a-1)\frac{F}{F-1}.\eea

Now using Lemma \ref{l2a} to (\ref{bm.23}), we have 
\[\parallel\;T(r,e^{\alpha})+O(1)=(k+1)T(r,f)+O(T(r,a))=(k+1)T(r,f)+o(T(r,f)),\] 
which shows that $\parallel T(r,a)=o(T(r,e^{\alpha}))$ and so $a$ is a small function of $e^{\alpha}$.

\smallskip
Let $\mu^1_{e^{\alpha}}(z_1)> 0$ for some $z_1\in \mathbb{C}^m$. If $\mu^{0}_a(z_1)=0$, then from (\ref{bm.23}), we have generically $(k+1)\mu^0_{f}(z_1)=\mu^1_{e^{\alpha}}(z_1)$ over $\text{supp}\;\mu^1_{e^{\alpha}}$. Since $k\geq 2$, we conclude that
\[\parallel\;\ol N(r,1;e^{\alpha})\leq \frac{1}{k+1}N(r,1;e^{\alpha})+N(r,0;a).\]

Therefore using first main theorem and Lemma \ref{l1}, we get
\beas
\parallel\;T\Big(r,e^{\alpha}\Big)&\leq& \ol N(r,e^{\alpha})+\ol N(r,0;e^{\alpha})+\ol N(r,1;e^{\alpha})+o(T(r,e^{\alpha})) \nonumber \\
&\leq &\frac{1}{k+1} T(r,e^{\alpha})+o(T(r,e^{\alpha})),\eeas 
which is impossible.\par

\medskip
{\bf Case 2.} Let $\varphi\equiv 0$. Then from (\ref{bm.1}), we have either $\partial_u(F)\equiv 0$ or $F\equiv \partial^k_u(F)$.
Note that 
\[\partial_u(F)=nf^{n-1}\partial_u(f).\]

If $\partial_u(F)\equiv 0$, then since $f\not\equiv 0$, we have $\partial_u(f)\equiv 0$ and so from (\ref{bm.1a}), we get 
$\partial^k_u(F)\equiv 0$, which is a contradiction. Hence $F\equiv \partial^k_u(F)$ and so (\ref{bm.1a}) gives
\bea\label{bm.24} f^{n-k}&\equiv&
\frac{n!}{(n-k)!}(\partial_u(f))^{k}+\frac{k(k-1)}{2}\frac{n!}{(n-k+1)!}f(\partial_u(f))^{k-2}\partial_u^2(f)+\cdots\nonumber\\&&+nf^{k-1}\partial_u^k(f).\eea

Let 
\[\psi_1=\frac{\partial_u(f)}{f}.\]

Assume that $\mu^0_{f}(z_0)> 0$ for $z_0\in \mathbb{C}^m$. Then from (\ref{bm.24}), we get $\mu^0_{f}(z_0)\leq \mu^0_{\partial_u(f)}(z_0)$ over $\text{supp}\;\mu_{\partial_u(f)}^0$. So we have generically
$\mu^0_{\psi_1}(z_0)\geq 0$ over $\text{supp}\;\mu^0_{\psi_1}$.
Now using Lemma \ref{l2}, we deduce that $\psi_1$ is an entire small function of $f$. Note that
\bea\label{bm.25} \partial_u(F)=nf^{n-1}\partial_u(f)=n\psi_1F.\eea

\smallskip
First we suppose that $k=1$. Since $F\equiv \partial_u(F)$, from (\ref{bm.15}) we have $F\equiv n\psi_1F$ and so $\psi_1\equiv \frac{1}{n}$. In this case we get $f\equiv n\partial_u(f)$.

\smallskip
Next we suppose that $k\geq 2$. From (\ref{bm.25}), we certainly have $\partial^2_u(F)=\psi_2F$, where $\psi_2=n\partial_u(\psi_1)+n^2\psi_1^2$. Now by induction and using (\ref{bm.25}) repeatedly, we obtain $\partial_u^k(F)=\psi_k F$. Here $\psi_i$ satisfies the recurrence formula
\bea\label{bm.26} \psi_{i+1}=n\partial_u(\psi_i)+n\psi_1\psi_i,\;\;i=1,2,\ldots,k-1.\eea

From the recurrence formula (\ref{bm.26}) for $\psi_i$, we can easily derive the expression
\bea\label{bm.27} \psi_k=\psi^{k}+\tilde P(\psi_1),\eea
where 
\[\tilde P(\psi_1)=\sum_{\mathbf{p}\in I} S_{\mathbf{p}}\psi_1^{p_0}\left(\partial_u(\psi_1)\right)^{p_1} \cdots\big(\partial_u^{k-1}(\psi_1)\big)^{p_{k-1}},\]
$\mathbf{p}=(p_0,\ldots,p_{k-1})\in\mathbb{Z}_+^{k}$ such that $\sum_{j=0}^{k-1}p_j\leq k-1$, $I$ is finite set of distinct element
and $S_{\mathbf{p}}$ is a constant.

Since $\partial_u^k(F)=\psi_k F$ and $F\equiv \partial^k_u(F)$, it follows that $\psi_k\equiv 1$ and so from (\ref{bm.27}), we get 
\bea\label{bm.28} \psi_1^{k-1}.\psi_1=-\tilde P(\psi_1)+1.\eea

Now applying Lemma \ref{l3} to (\ref{bm.28}), we get $\parallel m(r,\psi_1)=o(T(r,\psi_1))$.
Since $N(r,\psi_1)=0$, we have $\parallel T(r,\psi_1)=o(T(r,\psi_1))$, which shows that $\psi_1$ is a constant.
Therefore $\partial^k_u(F)=n^k\psi_1^kF$. Since $F\equiv \partial^k_u(F)$, we have $n^k\psi_1^k=1$ and so $f\equiv c\partial_u(f)$, where $(c/n)^k=1$.

\smallskip
Thus in either case we have $f\equiv c\partial_u(f)$, where $(c/n)^k=1$. In particular if $u_j=1$ and $u_i=0$ for $i\neq j$, then $f\equiv c\partial_{z_j}(f)$ and so
\[f(z)=e^{cz_j+\alpha(z)},\]
where $\alpha(z)=\alpha(z_1,\ldots,z_{j-1}, z_{j+1},\ldots,z_m)$ is a non-constant entire function in $\mathbb{C}^{m-1}$.
This completes the proof.
\end{proof}

\begin{proof}[Proof of Theorem \ref{t2.2}] 
Let $F=f^n$ and 
\bea\label{bbm.1} \varphi=\frac{\partial_u(F)(F-\partial_u(F))}{F(F-1)}=\frac{n f^{n-2}\partial_u(f)(f-n\partial_u(f))}{f^n-1}.\eea

Now we consider following two cases.\par

\medskip
{\bf Case 1.} Let $\varphi\not\equiv 0$. Using Lemma \ref{l2} to (\ref{bbm.1}), we get $\parallel m(r,\varphi)=o(T(r,f))$. 

Let $\mu_F^1(z_0)>0$ for $z_0\in \mathbb{C}^m$. Since $\operatorname{supp} \mu_F^1=\operatorname{supp}\mu_{\partial_u(F)}^1$, we have $\mu_{\partial_u(F)}^1(z_0)>0$. If $\mu_{F(2}^1(z_0)>0$, then $\mu_{\partial_u(F)}^0(z_0)>0$. Since $F$ and $\partial_u(F)$ share $1$ IM, we have $\mu_{\partial_u(F)}^1(z_0)>0$. So we get a contradiction. Hence $\mu_{F(2}^1(z_0)=0$ and so  
\bea\label{bbm.2} N_{(2}(r,1;F)=0.\eea

If $\mu_{F1)}^1(z_0)>0$, then from (\ref{bbm.1}) we have generically $\mu_{\varphi}^0(z_0)\geq 0$ over $\text{supp}\;\mu_F^1$.
Hence $\mu_{\varphi}^{\infty}(z)=0$ for all $z\in\mathbb{C}^m$ and so $N(r,\varphi)=0$. Since $\parallel m(r,\varphi)=o(T(r,f))$, we get $\parallel T(r,\varphi)=o(T(r,f))$. Using Lemma \ref{l2} to (\ref{bbm.1}), we get $\parallel m(r,0;f)=o(T(r,f))$.\par

Now we consider following two sub-cases.\par

\medskip
{\bf Sub-case 1.1} Let $n\geq 3$. Take $z_0\in \mathbb{C}^m$.
Let $\mu_f^0(z_0)>0$. Clearly $\mu_F^0(z_0)=n\mu^0_f(z_0)$ over $\text{supp}\;\mu_f^0$. Now 
$\mu_{\partial_u^k(F)}^0(z_0)\geq (n-1)\mu_f^0(z_0)$ over $\text{supp}\;\mu_f^0$. Then from (\ref{bbm.1}) we get generically
$\mu_{\varphi}^0(z_0)\geq (n-2)\mu_f^0(z_0)$ over $\text{supp}\;\mu_f^0$.

Now proceeding in the same way as done in the proof of Sub-case 1.1 of Theorem \ref{t2.1}, we get a contradiction.\par

\medskip
{\bf Sub-case 1.2.} Let $n=2$. In this case also one can easily prove that
\bea\label{bbm.2a}\parallel\;N_{(2}(r,0;f)=o(T(r,f))\;\;\text{and}\;\;\parallel\;T(r,f)=N_{1)}(r,0;f)+o(T(r,f)).\eea

Now from (\ref{bbm.1}), we get 
\[-(\partial_u(F))^2+F\partial_u(F)=\varphi (F^2-F).\]

Clearly we have
\bea\label{bbm.3}-2\partial_u(F) \partial_u^2(F)+(\partial_u(F))^2+F \partial_u^2(F)=\partial_u(\varphi) (F^2-F)+\varphi (2 F \partial_u(F)-\partial_u(F))\eea
and
\bea\label{bbm.4}&&-2(\partial_u^2(F))^2-2 \partial_u(F) \partial_u^3(F)+3\partial_u(F) \partial_u^2(F)+F \partial_u^3(F)=\partial_u^2(\varphi)(F^2-F)\nonumber\\&&
+2\partial_u(\varphi)(2F \partial_u(F)-\partial_u(F))+\varphi (2(\partial_u(F))^2+2 F \partial_u^2(F)-\partial_u^2(F)).\eea

Assume that $\mu_{F1)}^1(z_1)>0$ for some $z_1\in\mathbb{C}^m$. By the given condition, we have $\mu_{\partial_u(F)}^1(z_1)>0$. It is easy to conclude respectively from (\ref{bbm.3}) and (\ref{bbm.4}) that
\bea\label{bbm.5} \mu_{\partial_u^2(F)-\varphi-1}^0(z_1)>0\eea
and
\bea\label{bbm.5a}\mu_{\partial_u^3(F)-2\partial_u(\varphi)+\varphi^2-2\varphi-1}^0(z_1)>0.\eea

\smallskip
If possible suppose $\partial_u^2(F)-(\varphi+1) \partial_u(F)\equiv 0$. Then we get
\bea\label{bbm.6}(\partial_u(f))^2\equiv ((\varphi+1)\partial_u(f)-\partial^2_u(f))f.\eea

Assume that $\mu_{f1)}^0(z_0)>0$ for some $z_0\in\mathbb{C}^m$. Then from (\ref{bbm.6}), we get $\mu^0_{\partial_u(f)}(z_0)>0$ over $\text{supp}\;\mu_{\partial_u(f)}^0$. Consequently if 
\[h=\frac{\partial_u(f)}{f},\]
then we have generically
$\mu^0_{h}(z_0)\geq 0$ over $\text{supp}\;\mu^0_{h}$ and so using Lemma \ref{l2} and (\ref{bbm.2a}), one can easily deduce that
$h$ is a small function of $f$. Therefore $\partial_u(f)=hf$, where $h$ is a small function of $f$. Now from (\ref{bbm.1}), we get
\[\varphi=\frac{2h(1-2h)f^2}{f^2-1}\]
and so by Lemma \ref{l2a}, we get $\parallel T(r,f)=o(T(r,f))$, which is impossible. 

\smallskip
Hence $\partial_u^2(F)-(\varphi+1) \partial_u(F)\not\equiv 0$.
Let 
\bea\label{bbm.7} \phi=\frac{\partial_u^2(F)-(\varphi+1)\partial_u(F)}{F-1}.\eea

Clearly $\phi\not\equiv 0$ and by Lemma \ref{l2}, we get $\parallel m(r,\phi)=o(T(r,f))$. Let $\mu_{F1)}^1(z_1)>0$ for some $z_1\in\mathbb{C}^m$. Then from (\ref{bbm.5}) and (\ref{bbm.7}), we have generically $\mu_{\phi}^0(z_0)\geq 0$ over $\text{supp}\;\mu_F^1$. Therefore from (\ref{bbm.2}), we deduce that $\mu_{\phi}^{\infty}(z)=0$ generically for all $z\in\mathbb{C}^m$. Consequently $N(r,\phi)=0$. Since $m(r,\phi)=o(T(r,f))$, we have $T(r,\phi)=o(T(r,f))$.
Now from (\ref{bbm.7}), we get
\bea\label{bbm.8}\partial_u^2(F)=(\varphi+1) \partial_u(F)+\phi(F-1)\eea
and so 
\[\partial_u^3(F)=\partial_u(\varphi)\partial_u(F)+(\varphi+1)\partial_u^2(F)+\partial_u(\phi)(F-1)+\phi \partial_u(F).\]

Using (\ref{bbm.8}), we get  
\bea\label{bbm.9}\partial_u^3(F)=(\varphi^2+2\varphi+\phi+\partial_u(\varphi)+1)\partial_u(F)+\left((\varphi+1)\phi+\partial_u(\phi)\right)(F-1).\eea

We now consider following two sub-cases.\par

\medskip
{\bf Sub-case 1.2.1.} Let $2\varphi^2-\partial_u(\varphi)+\phi\not\equiv 0$.
If possible suppose 
\[\partial_u^3(F)-\left(2 \partial_u(\varphi)-\varphi^2+2 \varphi+1\right) \partial_u(F)\equiv 0.\]

Then from (\ref{bbm.9}), we get
\bea\label{bbm.10}(2\varphi^2+\phi-\partial_u(\varphi))\partial_u(F)+\left((\varphi+1)\phi+\partial_u(\phi)\right)(F-1)\equiv 0.\eea

Since $\partial_u(F)\not\equiv 0$, from (\ref{bbm.10}), we get 
\[(\varphi+1)\phi+\partial_u(\phi)\not\equiv 0.\]

Let $\mu_{f1)}^0(z_0)>0$ for some $z_0\in\mathbb{C}^m$. Then from (\ref{bbm.10}), we get $\mu^0_{(\varphi+1)\phi+\partial_u(\phi)}(z_0)>0$ over $\text{supp}\;\mu_{(\varphi+1)\phi+\partial_u(\phi)}^0$. Consequently by the first main theorem, we get
\[N_{1)}(r,0;f)\leq N(r,0;(\varphi+1)\phi+\partial_u(\phi))\leq T(r,(\varphi+1)\phi+\partial_u(\phi))+O(1)=o(T(r,f))\]
and so from (\ref{bbm.2a}) we have $T(r,f)=o(T(r,f))$, which is impossible. 

Hence 
\[\partial_u^3(F)-\left(2 \partial_u(\varphi)-\varphi^2+2 \varphi+1\right) \partial_u(F)\not\equiv 0.\]

Let
\bea\label{bbm.11} \psi=\frac{\partial_u^3(F)-\left(2\partial_u(\varphi)-\varphi^2+2\varphi+1\right)\partial_u(F)}{F-1}.\eea

Clearly $\psi\not\equiv 0$ and by Lemma \ref{l2}, we get $\parallel m(r,\psi)=o(T(r,f))$. Let $\mu_{F1)}^1(z_1)>0$ for some $z_1\in\mathbb{C}^m$. Then from (\ref{bbm.5a}) and (\ref{bbm.11}), we have generically $\mu_{\psi}^0(z_0)\geq 0$ over $\text{supp}\;\mu_F^1$. Therefore using (\ref{bbm.2}), we get $N(r,\psi)=0$ and so $\parallel T(r,\psi)=o(T(r,f))$.
Now (\ref{bbm.9}) and (\ref{bbm.11}) give
\bea\label{bbm.12}\left(2\varphi^2-\partial_u(\varphi)+\phi\right)\partial_u(F)=\left(\psi-\partial_u(\phi)-(1-\varphi)\phi\right)(F-1).\eea

Since $\partial_u(F)\not\equiv 0$, from (\ref{bbm.12}), we see that 
\[\psi-\partial_u(\phi)-(1-\varphi)\phi\not\equiv 0.\]

Let $\mu_{f1)}^0(z_0)>0$ for some $z_0\in\mathbb{C}^m$. Then $\mu_{\partial_u(F))}^0(z_0)>0$ and so from (\ref{bbm.12}), we get generically $\mu^0_{\psi-\partial_u(\phi)-(1-\varphi)\phi}(z_0)>0$ over $\text{supp}\;\mu_{\psi-\partial_u(\phi)-(1-\varphi)\phi}^0$. Clearly first main theorem gives
\beas \parallel\;N_{1)}(r,0;f)&\leq& N(r,0;\psi-\partial_u(\phi)-(1-\varphi)\phi)\leq T(r,(\psi-\partial_u(\phi)-(1-\varphi)\phi)+O(1)\\&=&o(T(r,f))\eeas
and so from (\ref{bbm.2a}), we have $\parallel T(r,f)=o(T(r,f))$, which is impossible.\par

\medskip
{\bf Sub-case 1.2.2.} Let 
\bea\label{bbm.13} 2\varphi^2-\partial_u(\varphi)+\phi\equiv 0.\eea

Assume that $\mu_{f}^0(z_0)>0$ for some $z_0\in\mathbb{C}^m$. Then $\mu_{\partial_u(F))}^0(z_0)>0$ and so from (\ref{bbm.4}) and (\ref{bbm.8}), we get respectively
\bea\label{bbm.14}\label{bbm.15} \mu^0_{\partial_u^2(F)\left(2\partial_u^2(F)+\varphi\right)}(z_0)>0\;\;\text{and}\;\;\mu^0_{\partial_u^2(F)+\phi}(z_0)>0\eea
over $\text{supp}\;\mu_{f}^0$.
We claim that 
\bea\label{bbm.16}\mu^0_{2\partial_u^2(F)+\varphi}(z_0)>0.\eea

Note that 
\[\partial_u^2(F)=2(\partial_u(f))^2+2f\partial_u(f).\]

If $\mu^0_{\partial_u^2(F)}(z_0)>0$, then we have generically $\mu^0_{\partial_u(f)}(z_0)>0$. Clearly from (\ref{bbm.1}), we get $\mu^0_{\varphi}(z_0)>0$ and so (\ref{bbm.16}) holds. If $\mu^0_{\partial_u^2(F)}(z_0)=0$, then (\ref{bbm.16}) holds directly from (\ref{bbm.14}).\par

\smallskip
Let $\phi-\frac{1}{2}\varphi\not\equiv 0$. It is clear from (\ref{bbm.15}) and (\ref{bbm.16}) that 
$\mu^0_{\phi-\frac{1}{2}\varphi}(z_0)>0$ holds generically. Consequently by the first main theorem, we get
\[\parallel\;N_{1)}(r,0;f)\leq N(r,0; \phi-\frac{1}{2}\varphi )\leq T\Big(r,\phi-\frac{1}{2}\varphi\Big)+O(1)=o(T(r,f))\]
and so from (\ref{bbm.2a}), we have $\parallel T(r,f)=o(T(r,f))$, which is impossible.\par

\smallskip
Let $\phi-\frac{1}{2}\varphi\equiv 0$. Then from (\ref{bbm.13}), we get
$2\varphi^2+\frac{1}{2}\varphi-\partial_u(\varphi)\equiv 0$. Since $\varphi\not\equiv 0$, we have
\bea\label{bbm.17}2\varphi=\frac{\partial_u(\varphi)}{\varphi}-\frac{1}{2}.\eea

Again since $\varphi$ is an entire function, applying Lemma \ref{l2} to (\ref{bbm.17}), we get 
\[\parallel\;T(r,\varphi)=m(r,\varphi)\leq m\Big(r, \frac{\partial_u(\varphi)}{\varphi}\Big)+O(1)=o(T(r,\varphi)),\]
which means that $\varphi$ is a constant and so (\ref{bbm.17}), we have 
$\varphi=-\frac{1}{4}$. 
Now from (\ref{bbm.1}), we get 
\[(2\partial_u(F)-F)^2=F.\]

Note that 
\[\partial_u(F)=2f\partial_u(f)\;\;\text{and}\;\;\partial_u^2(F)=2(\partial_u(f))^2+2f\partial^2_u(f).\]

Clearly
\bea\label{bbm.18} (4\partial_u(f)-f)^2=1.\eea

Now from (\ref{bbm.18}), we get $\partial_u(f)=\frac{f\pm 1}{4}$. 

We consider following sub-cases.\par

\medskip
{\bf Sub-case 1.2.2.1.} Let $\partial_u(f)=\frac{f+1}{4}$. Note that 
\[F-1=(f+1)(f-1)\;\;\text{and}\;\;\partial_u^2(f)=\frac{1}{4}\partial_u(f)=\frac{1}{16}(f+1).\]

Clearly $\partial_u(F)=\frac{1}{2}f(f+1)$ 
and 
\bea\label{bbm.19}\partial_u^2(F)=\frac{1}{8}(f+1)^2+\frac{1}{8}f(f+1)=\frac{1}{8}(2f^2+3f+1).\eea 

Assume that $\mu_{f}^{-1}(z_1)>0$ for some $z_1\in\mathbb{C}^m$. Since $F-1=(f+1)(f-1)$ and $N_{(2}(r,1;F)=0$, it follows that $\mu_{F1)}^{1}(z_1)>0$.
By the given condition, we have $\mu_{\partial_u(F)}^1(z_1)>0$. Again since $\partial_u(f)=\frac{f+1}{4}$, we have
$\mu_{\partial_u(f)1)}^{-1}(z_1)>0$ and so $\mu_{\partial_u(F)}^{0}(z_1)>0$, which is impossible. Therefore $\mu^{-1}_f(z)=0$ generically for all $z\in\mathbb{C}^m$.

Let us assume that $\mu_{F}^{1}(z_2)>0$ for some $z_2\in\mathbb{C}^m$. Certainly $\mu_{f}^{1}(z_2)>0$. Then from (\ref{bbm.19}), we have generically $\mu_{\partial_u^2(F)}^{3/4}(z_2)>0$. Consequently $\mu_{\partial_u(F)(2}^{1}(z)=0$ generically for all $z\in\mathbb{C}^m$ and so $N_{(2}(r,1;\partial_u(F))=0$. This shows that $F$ and $\partial_u(F)$ share $1$ CM. Then by Theorem \ref{t2.1}, we get $F\equiv \partial_u(F)$, which is impossible.\par

\medskip
{\bf Sub-case 1.2.2.2.} Let $\partial_u(f)=\frac{f-1}{4}$. Proceeding in the same way as done in the proof of Sub-case 1.2.2.1, we can easily get a contradiction.\par

\medskip
{\bf Case 2.} Let $\varphi\equiv 0$. Then from (\ref{bbm.1}), we have $F\equiv \partial_u(F)$.
Since $\partial_u(F)=nf^{n-1}\partial_u(f)$, we have $f\equiv n\partial_u(f)$. In particular if $u_j=1$ and $u_i=0$ for $i\neq j$, then we get $f=e^{\alpha}$, where $\alpha$ is a non-constant entire function in $\mathbb{C}^m$ such that $n\partial_{z_j}(\alpha)\equiv 1$.
Hence the proof is complete.
\end{proof}

\begin{proof}[Proof of Theorem \ref{t2.3}] 
Let 
\bea\label{sb.1} H=\frac{\partial_u^k(F)}{F},\eea
where $F=f^n$. We divide following two cases.\par

\medskip
{\bf Case 1.} Let $H\not\equiv 1$. Then using first main theorem and Lemma \ref{l2}, we get
\bea\label{sb.2} \parallel\;\ol N(r,1;F)\leq\ol N\Big(r,\frac{F}{\partial_u^k(F)-F}\Big)
&\leq& T\Big(r,\frac{\partial_u^k(F)-F}{F}\Big)+O(1)\\&\leq& T(r,H)+O(1)\leq  N(r,H)+o(T(r,f)).\nonumber\eea

We now prove that
\bea\label{sb.2a} \mu^{\infty}_H\leq k\mu^0_{f,1}.\eea

Note that $f^{-1}(\{0\})_s$ is an analytic subset of $\mathbb{C}^m$ of dimension $\leq m-2$, where $f^{-1}(\{0\})_s$ is the set of singular points of $f^{-1}(\{0\})$. It suffices to prove (\ref{sb.2a}) on $\mathbb{C}^m-f^{-1}(\{0\})_s$. Take $z_0\in \mathbb{C}^m-f^{-1}(\{0\})_s$. Then there is a holomorphic coordinate system $\left(U ; \varphi_1, \ldots, \varphi_m\right)$ of $z_0$ in $\mathbb{C}^m$ such that $U \cap f^{-1}(\{0\})=\left\{z \in U \mid \varphi_1(z)=0\right\}$ and $\varphi_j(z_0)=0$ for $j=1,2,\ldots,m$ (see proof of Lemma 2.3 \cite{FL}). Therefore biholomorphic coordinate transformation $z_j=z_j(\varphi_1, \ldots, \varphi_m)$, $j=1, \ldots, m$ near $0$ exists such that $z_0=z(0)=(z_1(0), \ldots, z_m(0))$. 
So we can write $f=\varphi_1^{-l} \hat{f}\left(\varphi_1, \ldots, \varphi_m\right)$, 
where $\hat{f}$ is a holomorphic function near $0$ such that $\hat{f}$ is zero free along the set $f^{-1}(\{0\})$. 
Clearly 
\bea\label{sb.2aa} F=\varphi_1^{nl} \tilde {f}\left(\varphi_1, \ldots, \varphi_m\right),\eea
where $\tilde {f}\left(\varphi_1, \ldots, \varphi_m\right)=\hat{f}^n\left(\varphi_1, \ldots, \varphi_m\right)$. Now for any $i\in\mathbb{Z}[1, m]$, we have
\bea\label{sb.2bb}\frac{\partial F}{\partial z_i}=\sideset{}{_{j=1}^m}{\sum} \frac{\partial F}{\partial \varphi_j}\frac{\partial \varphi_j}{\partial z_i}=nl \varphi_1^{nl-1}\tilde f\frac{\partial \varphi_1}{\partial z_i}+\varphi_1^{nl}\sideset{}{_{j=1}^m}{\sum}\frac{\partial \tilde f}{\partial \varphi_j}\frac{\partial \varphi_j}{\partial z_i}\eea
and so
\beas \partial_u(F)=nl \varphi_1^{nl-1}\tilde f \partial_h(\varphi_1) +\varphi_1^{nl}\sideset{}{_{j=1}^m}{\sum}\frac{\partial \tilde f}{\partial \varphi_j}\partial_u(\varphi_j).\eeas
which means $\mu^{0}_{\partial_u(F)}(z_0)\leq nl-1=n\mu^{0}_f(z_0)-\mu^{0}_{f,1}(z_0)$.
By induction, we can show
\bea\label{sb.2b} \mu^{0}_{\partial_u^k(F)}(z_0)\leq n\mu^{0}_f(z_0)-k\mu^{0}_{f,1}(z_0).\eea

Therefore from (\ref{sb.1}), (\ref{sb.2aa}) and (\ref{sb.2b}), we get $\mu^{\infty}_H(z_0)\leq k\mu^{\infty}_{f,1}(z_0)$ and so inequality (\ref{sb.2a}) holds. Consequently
$N(r,H)\leq k\ol N(r,0;f)$ and so from (\ref{sb.2}), we have
\bea\label{sb.3} \parallel\;\ol N(r,1;F)\leq k\ol N(r,0;f)+o(T(r,f)).\eea
 
Now using (\ref{sb.3}), we get by Lemmas \ref{l1} and \ref{l2a} that
\beas n\;T(r,f)=T(r,F)+O(1)&\leq& \ol N(r,F)+\ol N(r,0;f)+\ol N (r,1;F)+o(T(r,f))
\\ &\leq& (k+1)\ol N(r,0;f)+o(T(r,f)),\eeas 
which is impossible, since $n\geq k+2$.\par

\medskip
{\bf Case 2.} Let $H\equiv 1$. Then $F\equiv \partial_u^k(F)$. Remaining part of the proof of Theorem \ref{t2.3} follows from the proof of Case 2 of Theorem \ref{t2.1}.
This completes the proof.
\end{proof}

\begin{proof}[Proof of Corollary \ref{c2.1}]
Let $H$ be defined as in (\ref{sb.1}). By the given conditions, we see that $F$ and $\partial_u^k(F)$ share $1$ IM and $n\geq k$.
Suppose $H\not\equiv 1$. Since $\ol N_{2)}(r,0;f)=o(T(r, f))$, using (\ref{sb.3}), we get by Lemmas \ref{l1} and \ref{l2a} that
\beas n\;T(r,f)\leq \ol N(r,0;f)+\ol N(r,1;F)+o(T(r,f))&\leq& (k+1)\ol N(r,0;f)+o(T(r,f))
\\& \leq & \frac{k + 1}{3} T(r, f) + o(T(r, f)),\eeas 
which is impossible, since $n\geq k$.

\smallskip
If $H\equiv 1$, then the proof of Theorem \ref{t2.3} follows from the proof of Case 2 of Theorem \ref{t2.1}.
This completes the proof.
\end{proof}

\begin{proof}[Proof of Theorem \ref{t2.4}] 
Let $G=\partial_u^k(F)$. 
We consider following cases.\par

\medskip
{\bf Case 1.} Let $F-1$ and $G-1$ be linearly independent. By Corollary 1.40 \cite{7a}, there is $i \in \mathbb{Z}[1, m]$ such that the Wronskian determinant $W\not\equiv 0$.
Let
\bea\label{sbb.1} S=\frac{W}{(F-1)(G-1)}=\frac{\partial_{z_i}(G)}{G-1}-\frac{\partial_{z_i}(F)}{F-1}.\eea

Define 
\[I=I_F\cup I_G\;\;\text{and}\;\; S=\sideset{}{_{a\in\{0,1,\infty\}}}{\bigcup}(\operatorname{supp}\mu_F^a)_s\cup(\operatorname{supp}\mu_G^a)_s.\]

Clearly $\dim (I\cup S)\leq m-2$. 
First we prove that $(n-k-1)\mu^0_{f,1}\leq \mu^0_S$. It is sufficient to it on $\mathbb{C}^m-(I\cup S)$.
Take $z_0\in\mathbb{C}^m-(I\cup S)$. Let $l=\mu_f^0(z_0)>0$. Then (\ref{sb.2bb}) gives 
\[\mu^0_{\partial_{z_i}(F)}(z_0)\leq n\mu^0_f(z_0)-1.\]

Again from (\ref{sb.2b}), we get 
\[\mu^0_G(z_0)\leq n\mu^{0}_f(z_0)-k\mu^{0}_{f,1}(z_0)\]
and so 
\[\mu^0_{\partial_{z_i}(G)}(z_0)\leq n\mu^{0}_f(z_0)-(k+1)\mu^{0}_{f,1}(z_0).\]

Therefore from (\ref{sbb.1}), we have generically $(n-k-1)\mu^{0}_{f,1}\leq \mu^0_S$. Consequently
\bea\label{sbb.2} (n-k-1)\ol N(r,0;f)\leq N(r,0;S).\eea

\smallskip
Next we prove that 
\bea\label{sbb.2a} \mu^{\infty}_S\leq \mu^{\infty}_{f,1}.\eea

It suffices to prove (\ref{sbb.2a}) on $\mathbb{C}^m-(I\cup S)$. Take $z_0\in\mathbb{C}^m-(I\cup S)$.\par

\smallskip
Let $l_1=\mu_f^{\infty}(z_0)>0$. Since $z_0 \notin S$, there is a holomorphic coordinate system $\left(U ; \varphi_1, \ldots, \varphi_m\right)$ of $z_0$ in $\mathbb{C}^m-(I \cup S)$ such that $U \cap \operatorname{supp} \mu_f^{\infty}=\left\{z \in U \mid \varphi_1(z)=0\right\}$ and $\varphi_j(z_0)=0$ for $j=1,2,\ldots,m$. A biholomorphic coordinate transformation $z_j=z_j(\varphi_1, \ldots, \varphi_m)$, $j=1, \ldots, m$ near $0$ exists such that $z_0=(z_1(0), \ldots, z_m(0))$ and $f=\varphi_1^{-l_1} \hat{f}\left(\varphi_1, \ldots, \varphi_m\right)$, where $\hat{f}$ is a holomorphic function near $0$ and zero free along the set $\operatorname{supp} \mu_f^{\infty}$. Clearly 
\bea\label{sbb.3}F=\varphi_1^{-nl_1} \tilde {f}\left(\varphi_1, \ldots, \varphi_m\right),\eea
where $\tilde {f}\left(\varphi_1, \ldots, \varphi_m\right)=\hat{f}^n\left(\varphi_1, \ldots, \varphi_m\right)$.
Now for any $i\in\mathbb{Z}[1, m]$, we have
\bea\label{sbb.3a} \partial_i(F)=\sideset{}{_{j=1}^m}{\sum} \frac{\partial F}{\partial \varphi_j}\frac{\partial \varphi_j}{\partial z_i}=\frac{nl_1}{\varphi_1^{nl_1+1}}\tilde f\frac{\partial \varphi_1}{\partial z_i}+\frac{1}{\varphi_1^{nl_1}}\sideset{}{_{j=1}^m}{\sum}\frac{\partial \tilde f}{\partial \varphi_j}\frac{\partial \varphi_j}{\partial z_i}\eea
and so
\beas\partial_u(F)=\frac{nl_1}{\varphi_1^{nl+1}}\tilde f \partial_u(\varphi_1) +\frac{1}{\varphi_1^{nl_1}}\sideset{}{_{j=1}^m}{\sum}\frac{\partial \tilde f}{\partial \varphi_j}\partial_u(\varphi_j),\eeas
which means $\mu^{\infty}_{\partial_u(F)}(z_0)\leq nl_1+1=n\mu^{\infty}_f(z_0)+\mu^{\infty}_{f,1}(z_0)$.
By induction, we can show
\bea\label{sbb.4} \mu^{\infty}_G(z_0)=\mu^{\infty}_{\partial_u^k(F)}(z_0)\leq n\mu^{\infty}_f(z_0)+k\mu^{\infty}_{f,1}(z_0).\eea

Consequently 
\bea\label{sbb.5}\mu^{\infty}_{\partial_i(G)}(z_0)\leq n\mu^{\infty}_f(z_0)+(k+1)\mu^{\infty}_{f,1}(z_0).\eea

Now using (\ref{sbb.3})-(\ref{sbb.5}) to (\ref{sbb.1}), we have $\mu^{\infty}_S\leq \mu^{\infty}_{f,1}$ and so the inequality (\ref{sbb.2a}) holds.

\smallskip
Let $\mu_F^1(z_0)>0$. Since $\operatorname{supp} \mu_F^1=\operatorname{supp}\mu_G^1$, we have $\mu_G^1(z_0)>0$.
By the given condition, we get $\mu^1_F(z_0)=\mu^1_G(z_0)$. Then (\ref{sbb.1}) gives $\mu^{\infty}_S(z_0)=0$ and so the inequality (\ref{sbb.2a}) holds.

\smallskip
Consequently from (\ref{sbb.2}), we get $N(r,S)\leq \ol N(r,f)$.
On the other hand using Lemma \ref{l2} to (\ref{sbb.1}), we get $m(r,S)=o(T(r,f))$. Now using first main theorem, we get from (\ref{sbb.2a}) that
\bea\label{sbb.7} \parallel\; (n-k-1)N(r,0;f)\leq N(r,0;S)\leq T(r,S)+O(1)&=&m(r,S)+N(r,S)+O(1)\nonumber\\&\leq& \ol N(r,f)+o(T(r,f)).\eea

Clearly $H\not\equiv 1$, where $H$ is defined as in (\ref{sb.1}). We claim that the following inequality
\bea\label{sbb.8} \mu^{\infty}_H\leq k\mu^{\infty}_{f,1}+k\mu^0_{f,1}\eea
holds. Let $z_0\in\mathbb{C}^m$.

\smallskip
 First we suppose $\mu_f^0(z_0)>0$. Then from the proof of Theorem \ref{t2.3}, we have $\mu^{\infty}_H(z_0)\leq k\mu^0_{f,1}(z_0)$ over $\text{supp}\;\mu^0_f$ and so $\mu^{\infty}_H(z_0)\leq k\mu^0_{f,1}(z_0)$. Then inequality (\ref{sbb.8}) holds.

\smallskip
Next we suppose $l_1=\mu_f^{\infty}(z_0)>0$. Therefore from (\ref{sb.1}), (\ref{sbb.3}) and (\ref{sbb.4}), we get $\mu^{\infty}_H(z_0)\leq k\mu^{\infty}_{f,1}(z_0)$ and so inequality (\ref{sbb.8}) holds. Finally from (\ref{sbb.8}), we get 
\[N(r,H)\leq k\ol N(r,f)+k\ol N(r,0;f)\]
and so (\ref{sb.2}) gives
\bea\label{sbb.9a} \parallel\;\ol N(r,1;F)\leq k\ol N(r,f)+k\ol N(r,0;f)+o(T(r,f)).\eea

Now from (\ref{sbb.7}), we get 
\bea\label{sbb.9} \parallel\;\ol N(r,1;F)\leq k\Big(1+\frac{1}{n-k-1}\Big)\ol N(r,f)+o(T(r,f)).\eea

Then using (\ref{sbb.2}) and (\ref{sbb.9}), we get by Lemmas \ref{l1} and \ref{l2a} that
\beas \parallel\;n\;T(r,f)\leq (k+1)\Big(1+\frac{1}{n-k-1}\Big) T(r, f) + o(T(r, f)),\eeas 
which is impossible, since $n\geq k+2$.\par

\medskip
{\bf Case 2.} Let $F-1$ and $G-1$ be linearly dependent. There exists $c\in\mathbb{C}\setminus\{0\}$ such that $F-1=c(G-1)$.
If $c\neq 1$, then since $n\geq k+2$, we get $N(r,0;f)=0$. Also we have $H\not\equiv 1$. Therefore using (\ref{sbb.9a}), we get by Lemmas \ref{l1} and \ref{l2a} that
\beas \parallel\;n\;T(r,f)\leq \ol 
N(r,F)+\ol N(r,1;F)+o(T(r,f))\leq (k+1)T(r, f) + o(T(r, f)),\eeas 
which is impossible, since $n\geq k+2$. Hence $c=1$ and so $F\equiv \partial_u^k(F)$. In particular if $f$ and $\partial_u(f)$ share $\infty$ CM, then proceeding in the same way as done in the proof of Case 2 of Theorem \ref{t2.1}, we get the require conclusions.
Hence the proof is complete.
\end{proof}

\begin{proof}[Proof of Theorem \ref{t2.5}] 
Let $H$ be defined as in (\ref{sb.1}). If $H\not\equiv 1$, then using (\ref{sbb.9a}), we get by Lemmas \ref{l1} and \ref{l2a} that
\beas \parallel nT(r,f)\leq \ol 
N(r,f)+\ol N(r,0;f)+\ol N(r,1;F)+o(T(r,f))
\leq (2k + 2) T(r, f) + o(T(r, f)),\eeas 
which is impossible, since $n\geq 2k+3$. Hence $H\equiv 1$, i.e., $F\equiv \partial_u(F)$. Remaining part follows from the proof of Case 2 of Theorem \ref{t2.5}. Hence the theorem follows.
\end{proof}

\medskip
{\bf Statements and declarations:}

\smallskip
\noindent \textbf {Conflict of interest:} The authors declare that there are no conflicts of interest regarding the publication of this paper.

\smallskip
\noindent{\bf Funding:} There is no funding received from any organizations for this research work.

\smallskip
\noindent \textbf {Data availability statement:}  Data sharing is not applicable to this article as no database were generated or analyzed during the current study.

\end{document}